\documentclass[11pt]{amsart}
\usepackage[T1]{fontenc}
\usepackage{lmodern}
\usepackage{amsmath,amssymb,amsthm,mathtools}
\usepackage{microtype}
\usepackage[margin=1.05in]{geometry}
\usepackage{xurl}
\usepackage[hidelinks]{hyperref}
\newcommand{\Z}{\mathbb Z}
\newcommand{\F}{\mathbb F}
\DeclareMathOperator{\GL}{GL}
\DeclareMathOperator{\St}{St}
\DeclareMathOperator{\id}{id}

\DeclareMathOperator{\Hom}{Hom}

\newtheorem{theorem}{Theorem}[section]
\newtheorem{lemma}[theorem]{Lemma}
\newtheorem{proposition}[theorem]{Proposition}
\newtheorem{corollary}[theorem]{Corollary}
\theoremstyle{definition}
\newtheorem{definition}[theorem]{Definition}
\theoremstyle{remark}
\newtheorem{remark}[theorem]{Remark}
\numberwithin{equation}{section}

\newcommand{\eqnum}[1][]{%
  \refstepcounter{equation}%
  \if\relax\detokenize{#1}\relax\else\label{#1}\fi
  \leqno\hbox{\normalfont(\theequation)}}
\newcounter{displaytag}

\newcommand{\eqrownum}[1][]{%
  \hbox{\refstepcounter{equation}%
    \if\relax\detokenize{#1}\relax\else\label{#1}\fi
    \normalfont(\theequation)}}
\makeatletter
\newcommand{\eqrowtag}[2]{%
  \hbox{\refstepcounter{displaytag}%
    \def\@currentlabel{#1}%
    \label{#2}%
    \normalfont(#1)}}
\makeatother

\title[General linear and Steinberg groups over the Leavitt algebra $L_{\mathbb F_2}(1,2)$]{General linear and Steinberg groups\\over the Leavitt algebra $L_{\mathbb F_2}(1,2)$}
\author{Huynh Viet Khanh}
\hypersetup{pdfauthor={Huynh Viet Khanh},pdftitle={General linear and Steinberg groups over the Leavitt algebra L\_F2(1,2)}}
\makeatletter
\renewcommand{\@setauthors}{%
  \begin{center}
  \normalfont
  {\large\authors\par}
  \medskip
  {\small Department of Mathematics and Informatics, HCMC University of Education\\
  280 An Duong Vuong Str., Ho Chi Minh City, Vietnam\\
  \href{mailto:khanhhv@hcmue.edu.vn}{\texttt{khanhhv@hcmue.edu.vn}}\par}
  \end{center}%
}
\makeatother
\date{}
\subjclass[2020]{Primary 20J05, 16S88; Secondary 19C20, 16U60, 20F05}
\keywords{Leavitt algebras, groups of units, group homology, acyclic groups, Steinberg groups, finite presentations}
\begin{document}
\begin{abstract}
Let $R=L_{\F_2}(1,2)$. We prove that $\GL_r(R)$ is integrally acyclic for every $r\geq1$ and that the canonical map $\St_r(R)\to\GL_r(R)$ is an isomorphism for every $r\geq3$. The unit group $R^\times$ is finitely presented, and we describe an explicit finite presentation. The homology calculation uses leaf coordinates, simultaneous extensions of ordered frames, and finite-field actions on stabilizers. The Steinberg argument lifts relations from a simply connected frame complex and refines coordinates. We also state separate criteria for the two arguments over rings of characteristic two.
\end{abstract}
\maketitle

\section{Introduction}

Let $R=L_{\F_2}(1,2)$ be the binary Leavitt algebra and let $G=R^\times$. Complete binary leaf sets determine isomorphisms $R^r\cong R$, $M_r(R)\cong R$, and $\GL_r(R)\cong G$ for every $r\geq1$. Isomorphisms between Leavitt algebras and their matrix rings were studied by Abrams, \'{A}nh and Pardo \cite{AAP08}. Here we determine the integral homology of these groups and the relations among their elementary matrices at each finite rank.

The stable calculation alone does not settle these questions. By Ara--Brustenga--Corti\~nas \cite[Theorem~7.6]{ABC09}, $K(R)$ is the homotopy cofiber of multiplication by $-1$ on $K(\F_2)$; hence the stable general linear group has zero positive integral homology. The isomorphisms from leaf coordinates, however, differ from the standard stabilization maps $\GL_r(R)\to\GL_{r+1}(R)$, so this vanishing does not by itself imply vanishing at a fixed rank. Likewise, $K_2(R)=0$ does not determine the kernel of a Steinberg map at a specified finite rank.

For $r\geq3$, let $\phi_r:\St_r(R)\to\GL_r(R)$ be the canonical homomorphism defined by $X_{ij}(a)\mapsto I_r+aE_{ij}$. We prove that
$$
\begin{gathered}
H_n(\GL_r(R),\Z)=0\quad\text{for all }n>0\text{ and }r\geq1,\\
\text{and}\qquad\phi_r:\St_r(R)\xrightarrow{\ \cong\ }\GL_r(R)\quad\text{for all }r\geq3.
\end{gathered}
$$
Thus, at each rank $r\geq3$, every invertible matrix is a product of elementary matrices, and every relation among these generators follows from the ordinary Steinberg relations at that rank. The theorem of Krsti\'c and McCool applied at rank five then implies that $G$ is finitely presented. Appendix~\ref{app:explicit-presentation} records a finite presentation, with bounds on the lengths of the coefficient words and the images of the generators in $G$.

Earlier work describes generators for these groups. Over a fixed finite field, Freeland considered groups generated by copies of general linear groups over the coefficient field associated with finite leaf sets; he also asked whether alternative definitions, including one using all units, agree \cite[Definitions~4.3.9--4.3.10 and pp.~173--174]{Freeland19}. In the binary case, Khanh and Thanh identify the group generated by these matrices with $G$ and discuss finite presentability in terms of unstable Steinberg kernels \cite[Theorem~4.3 and \S8]{KT26}. For a finite graph, Preusser constructs generating sets for the general linear groupoid of its Leavitt path algebra and hence for its general linear groups, including the unit group \cite[Theorem~4.13]{Preusser26}.

Connections with Higman--Thompson groups provide a separate homological comparison; see Pardo \cite{Pardo11} and Gorazd \cite{Gorazd26}. Szymik and Wahl prove that Thompson's group $V$ is integrally acyclic \cite{SW19}. Li obtains acyclicity criteria for topological full groups of ample groupoids \cite[Corollary~D]{Li25}, while Palmer and Wu prove acyclicity for labelled Thompson groups and twisted Brin--Thompson groups \cite[Theorems~A and~C]{PW25}. The acyclicity of the standard copy of $V$ does not by itself imply that of $G$: this copy is a proper subgroup by Remark~\ref{rem:proper-standard-V}, and the homology of a subgroup does not in general determine that of the ambient group.

In Sections~\ref{sec:leaf-coordinates}--\ref{sec:acyclicity-criterion}, we first represent standard stabilization by an endomorphism $c:G\to G$ in leaf coordinates. Stable vanishing makes $c_*$ locally nilpotent on positive integral homology, while conjugacy between $c$ and $c^2$ implies $c_*^2=c_*$. Hence $c_*=0$. To pass from this assertion about stabilization to vanishing at each rank, we use complexes of completable ordered frames, as in homological stability \cite{vdK80,Suslin84,NS90,RWW17}. We prove a simultaneous version of the Reduction Theorem \cite[Theorem~2.2.11]{AAS17}: a single right multiplier works for a finite family and produces nonzero free complements with bases of prescribed cardinalities. This permits a common extension of every finite family of frames in the required range.

To compute stabilizer homology, we use finite-field weights as in Quillen \cite[\S11, Lemmas~15--16]{Quillen72}; compare \cite[Proposition~2.1 and Theorem~3.1]{Knudson02}, \cite[Proposition~1.10 and Theorem~1.11]{NS90}, and \cite[\S2]{Schlichting17}. The embedded fields need not be central. Right multiplication on frame vectors defines an action on the whole frame complex; on a stabilizer it induces inverse left multiplication on the additive normal subgroup, commuting with the action of the remaining general linear group. The stabilizer spectral sequence then proves integral acyclicity by induction on the homological degree.

The Steinberg comparison in Section~\ref{sec:steinberg} uses simple connectivity of the frame complex in rank four, also obtained from the common extension. Brown's presentation \cite[Theorem~1 and \S3]{Brown84} lifts to the Steinberg group, with a section whose image contains every Steinberg generator. Acyclicity and centrality after one stabilization supply the kernel hypothesis at rank four. Voronetsky's refinement theorem transfers the isomorphism to every rank $r\geq3$ \cite[\S4, Proposition~1]{Voronetsky20}. Section~\ref{sec:finite-presentation} applies Krsti\'c--McCool \cite[Theorem~3]{KM99} to obtain finite presentability. The acyclicity and Steinberg criteria are stated separately with their respective hypotheses. No assertion is made about the Steinberg kernel at rank two.

\section{Leaf coordinates and standard inclusions}
\label{sec:leaf-coordinates}

Modules are right modules, and matrices act on columns from the left. Homology has integral coefficients unless another coefficient module is specified.

We prepare the application of the criterion in Section~\ref{sec:acyclicity-criterion} to the binary Leavitt algebra
$$
R=\F_2\langle e,f,e^*,f^*\rangle/ (e^*e=f^*f=1,\ e^*f=f^*e=0,\ ee^*+ff^*=1), \eqnum[eq:leavitt-algebra]
$$
and write $G_r=\GL_r(R)$ and $G=R^\times$. We use the canonical $\F_2$-linear involution on $R$, interchanging the defining generators $e,f$ with $e^*,f^*$. A positive word is a word in $e,f$, including the empty word. If $\alpha=c_1\cdots c_h$, write $\alpha^*=c_h^*\cdots c_1^*$.

The algebra $R$ is nonzero. On the $\F_2$-vector space with basis the infinite binary words, let $e,f$ prepend the indicated letter. Let $e^*,f^*$ remove a matching first letter and send a word beginning with the other letter to zero. These operators satisfy~\eqref{eq:leavitt-algebra}, with nonzero identity.

For these standard coordinates in Leavitt algebras, see \cite{AAP08,AAS17}. A complete finite binary leaf set is obtained from the empty word by repeatedly replacing a word $\alpha$ by $\alpha e,\alpha f$; its leaves may be ordered arbitrarily. The following lemma records the inverse coordinate maps.

\begin{lemma}\label{lem:leaf-coordinates}
For a complete ordered leaf set $(\alpha_1,\ldots,\alpha_m)$, let $L=(\alpha_1,\ldots,\alpha_m)$ and $L^*=(\alpha_1^*,\ldots,\alpha_m^*)^{\mathrm t}$. Then
$$
LL^*=1\qquad\text{and}\qquad L^*L=I_m. \eqnum[eq:leaf-identities]
$$
Consequently $x\mapsto Lx$ and $a\mapsto L^*a$ are mutually inverse maps of right modules $R^m\rightleftarrows R$, and
$$
M_m(R)\rightleftarrows R,\qquad Z\longmapsto LZL^*\qquad\text{and}\qquad a\longmapsto L^*aL \eqnum[eq:leaf-ring-isomorphisms]
$$
are mutually inverse unital ring isomorphisms. Every positive integer $m$ occurs as the size of a complete leaf set.
\end{lemma}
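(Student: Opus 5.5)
I would prove the two identities in \eqref{eq:leaf-identities} by induction on the construction of the complete leaf set, and then deduce everything else formally.

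\textbf{Base case and inductive step.} For the trivial leaf set $(\emptyset)$ both products equal $1$. Reordering the leaves permutes the entries of $L$ and of $L^*$ by the same permutation $\sigma$. Hence $LL^*=\sum_i\alpha_i\alpha_i^*$ is unchanged, and $L^*L$ becomes $P_\sigma^{-1}(L^*L)P_\sigma$, which is still $I_m$ if it was before. So order is irrelevant and I only need to track the entries. Now suppose the set $\{\alpha_i\}$ satisfies both identities, and $\alpha=\alpha_k$ is replaced by $\alpha e,\alpha f$.
\begin{itemize}
\item For $LL^*$: the term $\alpha\alpha^*$ is replaced by $\alpha e e^*\alpha^*+\alpha ff^*\alpha^*=\alpha(ee^*+ff^*)\alpha^*=\alpha\alpha^*$, so $LL^*=1$ persists.
\item For $L^*L$: the entries are $\beta^*\gamma$ for leaves $\beta,\gamma$. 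By hypothesis $\alpha_i^*\alpha_j=\delta_{ij}$, and only entries involving the new leaves need checking. First, $(\alpha e)^*(\alpha e)=e^*\alpha^*\alpha e=e^*e=1$, and similarly for $f$. Next, $(\alpha e)^*(\alpha f)=e^*f=0$. Finally, for an old leaf $\beta\ne\alpha$ we have $(\alpha e)^*\beta=e^*(\alpha^*\beta)=0$ and $\beta^*(\alpha e)=(\beta^*\alpha)e=0$.
\end{itemize}
This completes the induction, and it only uses the defining relations \eqref{eq:leavitt-algebra}.

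\textbf{Formal consequences.} The map $x\mapsto Lx$ from $R^m$ to $R$ is right $R$-linear, as is $a\mapsto L^*a$. Their composites are $LL^*a=a$ and $L^*Lx=x$, so they are mutually inverse. For the ring maps,
\[
L^*(LZL^*)L=(L^*L)Z(L^*L)=Z
\quad\text{and}\quad
L(L^*aL)L^*=a,
\]
so they are mutually inverse bijections. Additivity is clear. Multiplicativity holds because
\[
LZL^*\,LWL^*=LZ(L^*L)WL^*=LZWL^*,
\]
and the maps are unital since $LI_mL^*=LL^*=1$.

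\textbf{Sizes.} Each expansion step increases the number of leaves by exactly one, starting from one leaf. After $m-1$ steps (for instance, repeatedly expanding the last leaf) we obtain a complete leaf set of size $m$.

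\textbf{Main obstacle.} The only nonformal step is the orthogonality check $\beta^*\alpha e=0$ for old leaves $\beta\neq\alpha$, and it follows directly from the inductive hypothesis, so I expect no real difficulty.
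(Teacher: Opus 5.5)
Your proposal is correct and follows essentially the paper's proof: the range-projection induction giving $LL^*=1$, the formal derivation of the module and ring isomorphisms, and the existence of leaf sets of every size all match. The only variation is that you carry the orthogonality $\alpha_i^*\alpha_j=\delta_{ij}$ through the same induction, whereas the paper derives it from the prefix-incomparability of distinct leaves together with cancellation; your route has the small advantage that prefix-incomparability never needs separate justification.
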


\begin{proof}
Distinct leaves are prefix-incomparable. Cancellation therefore implies $\alpha_i^*\alpha_j=\delta_{ij}$. Starting with $1=1\cdot1^*$, each leaf replacement preserves the sum of the range projections, because $\alpha\alpha^*=\alpha ee^*\alpha^*+\alpha ff^*\alpha^*$. Hence $\sum_i\alpha_i\alpha_i^*=1$, proving~\eqref{eq:leaf-identities}. These identities verify the two pairs of inverse maps and show that the ring maps preserve products and units. A complete leaf set of size $m$ is the comb
$$
L_m=(e^{m-1},e^{m-2}f,\ldots,ef,f),\qquad\text{with }L_1=(1).
$$
Its recurrence is $L_{m+1}=(eL_m,f)$.
\end{proof}

\begin{remark}\label{rem:proper-standard-V}
The standard copy of Thompson's group $V$ consists of the units $\sum_i\alpha_i\beta_i^*$ associated with bijections between complete ordered leaf sets of the same size. These units are unitary. In contrast, $u=1+ef^*$ satisfies $u^{-1}=u$ and $u^*=1+fe^*\ne u$, since $e^*(ef^*-fe^*)f=1$. Thus the standard copy of $V$ is a proper subgroup of $G$.
\end{remark}

The following proposition combines the stable calculation of \cite{ABC09} with an identity involving finite sums, as in the theory of sum rings \cite[\S2]{Wagoner72}.

\begin{proposition}\label{prop:zero-padding}
For every $r\geq1$, the standard inclusion $G_r\to G_{r+1}$, $g\mapsto\operatorname{diag}(g,1)$, induces zero on $H_n(-,\Z)$ for every $n>0$.
\end{proposition}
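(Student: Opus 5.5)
Throughout, all groups are identified with $G$ through leaf coordinates. The plan has three steps: show that every standard inclusion $G_r\to G_{r+1}$ becomes one fixed endomorphism $c$ of $G$; deduce from the stable calculation of \cite{ABC09} that $c_*$ is locally nilpotent on positive-degree homology; and show that $c$ and $c^2$ are conjugate, so that $c_*$ is idempotent. Together these force $c_*=0$.

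\emph{Step 1: the inclusion in leaf coordinates.} Use the comb leaf sets $L_r$ of Lemma~\ref{lem:leaf-coordinates} to identify $G_r\cong G$ by $g\mapsto L_rgL_r^*$. The recurrence $L_{r+1}=(eL_r,f)$ gives
$$
L_{r+1}\operatorname{diag}(g,1)L_{r+1}^*=e\,(L_rgL_r^*)\,e^*+ff^*.
$$
So under these identifications, every standard inclusion $G_r\to G_{r+1}$ becomes the same homomorphism
$$
c:G\to G,\qquad c(a)=eae^*+ff^*,
$$
which is independent of $r$. It therefore suffices to prove that $c_*=0$ on $H_n(G,\Z)$ for all $n>0$.

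\emph{Step 2: local nilpotence.} By \cite[Theorem~7.6]{ABC09}, $K(R)$ is the cofiber of multiplication by $-1$ on $K(\F_2)$. Since $-1$ is an equivalence, $K_n(R)=0$ for all $n\ge1$. Hence the component $BGL(R)^+$ has vanishing homotopy groups and is contractible. Because the plus construction preserves integral homology, $H_n(\GL(R),\Z)=\operatorname{colim}_r H_n(G_r,\Z)=0$ for $n>0$. Now take a class $x\in H_n(G_r)$ with $n>0$. Its image in $H_n(G_{r+k})$ vanishes for some $k$. By Step 1 this image is $c_*^k x$, so $c_*^kx=0$.

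\emph{Step 3: conjugacy of $c$ and $c^2$.} Compute
$$
c^2(a)=e^2a(e^*)^2+eff^*e^*+ff^*.
$$
Both $c(a)$ and $c^2(a)$ act as $a$ on a copy of $R$: $c(a)$ on the summand $eR$ and $c^2(a)$ on $e^2R$. Both act as the identity on the complementary summand, which is $fR$ for $c(a)$ and $efR\oplus fR$ for $c^2(a)$. I would therefore take the unitary in the standard copy of $V$
$$
w=e^2e^*+ef\,e^*f^*+f\,f^*f^*.
$$
This $w$ sends $e\mapsto e^2$ and $f\mapsto(ef)e^*+ff^*$, so it maps $fR$ onto $efR\oplus fR$ through leaf coordinates. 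The things to verify are $ww^*=w^*w=1$, $we=e^2$, and $wc(a)w^{-1}=c^2(a)$. Since $w$ is built from two complete leaf sets, these reduce to finitely many cancellations using $\alpha_i^*\alpha_j=\delta_{ij}$, as in Lemma~\ref{lem:leaf-coordinates}. Granting them, $c^2$ equals $c$ followed by an inner automorphism. Inner automorphisms act trivially on group homology, so $c_*^2=c_*$.

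\emph{Conclusion and main risk.} From Step 3, $c_*=c_*^k$ for every $k\ge1$. For a positive-degree class $x$, choosing $k$ from Step 2 gives $c_*x=c_*^kx=0$. The main obstacle I expect is Step 2, not Step 3. One must check that the cited cofiber statement really gives $K_n(R)=0$ for all $n\ge1$, in particular that the map is multiplication by $1-2=-1$ and not a map that kills only the odd torsion. One must also check that Quillen's $K$-theory of this non-commutative ring is computed by the plus construction on $\GL(R)$, which holds for any ring. The conjugacy identity in Step 3 is routine once $w$ is written down.
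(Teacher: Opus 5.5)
Your proof is correct and follows the paper's argument: you use the same endomorphism $c(u)=eue^*+ff^*$ and the same Ara--Brustenga--Corti\~nas input, and your conjugating element is literally the paper's, since $(fe)^*=e^*f^*$ and $(ff)^*=f^*f^*$ turn your $w$ into $w=ee\,e^*+ef(fe)^*+f(ff)^*$. The only difference is how you get local nilpotence: you apply the comb identification at every rank, so the whole standard system becomes $(G,c)$, whereas the paper uses the dyadic coordinates $\Phi_s$ to identify $(G,c)$ with a cofinal subsystem; both routes work equally well.
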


\begin{proof}
For the graph with one vertex and two loops, the algebraic $K$-theory sequence of Ara--Brustenga--Corti\~nas \cite[Theorem~7.6]{ABC09} has map $1-2=-1$ on $K(\F_2)$. The graph is row-finite and has no sinks. The coefficient field is regular supercoherent: every polynomial ring in finitely many variables over $\F_2$ is regular Noetherian. By the theorem, $K(R)$ is the homotopy cofiber of an equivalence. Hence $K_i(R)=0$ for all $i>0$.

Let $\GL_\infty(R)$ be the direct limit under standard inclusions. The plus construction $B\GL_\infty(R)^+$ has fundamental group $K_1(R)$ and higher homotopy groups $K_i(R)$, $i\geq2$, and preserves integral homology \cite[Chapter~IV, Definitions~1.1 and~1.1.1]{Weibel13}. It is a connected CW complex with trivial homotopy groups, hence is contractible by Whitehead's theorem. Thus
$$
H_n(\GL_\infty(R),\Z)=0\qquad\text{for }n>0. \eqnum[eq:stable-homology-zero]
$$

Define $c:G\to G$ by $c(u)=eue^*+ff^*$. Orthogonality implies $c(u)c(v)=c(uv)$, $c(1)=1$, and $c(u)^{-1}=c(u^{-1})$. For $s\geq0$, let $\Phi_s:R\to M_{2^s}(R)$ have entries $\Phi_s(a)_{\alpha,\beta}=\alpha^*a\beta$, where $\alpha,\beta$ range over the words of length $s$. These are the ring isomorphisms of Lemma~\ref{lem:leaf-coordinates}. Ordering the words at depth $s+1$ by $e\alpha$ followed by $f\alpha$, we have
$$
\Phi_{s+1}(c(u))=\operatorname{diag}(\Phi_s(u),I_{2^s}). \eqnum[eq:dyadic-compression]
$$

Thus the system $(G,c)$ is isomorphic to the dyadic cofinal subsystem of the standard general linear system. The normalized bar complex commutes with this filtered direct limit, since every generator and every finite chain occurs at a finite stage. According to~\eqref{eq:stable-homology-zero}, $\varinjlim(H_n(G,\Z),c_*)=0$. Therefore every element of $H_n(G,\Z)$ is killed by a positive iterate of $c_*$; that is, $c_*$ is locally nilpotent.

It remains to prove that $c_*$ is idempotent. The units associated with the complete leaf sets $(e,fe,ff)$ and $(ee,ef,f)$ are
$$
w=ee\,e^*+ef(fe)^*+f(ff)^*\qquad\text{and}\qquad w^{-1}=e(ee)^*+fe(ef)^*+ff\,f^*.
$$
Both inverse products follow from~\eqref{eq:leaf-identities}. Moreover $we=ee$ and $w(ff^*)w^{-1}=ef(ef)^*+ff^*$. Hence
$$
wc(u)w^{-1}=ee\,u(ee)^*+ef(ef)^*+ff^*=c(c(u)). \eqnum[eq:compression-idempotence]
$$
Inner conjugation acts trivially on homology, so $c_*^2=c_*$. Since $c_*$ is also locally nilpotent, it follows that $c_*=0$ on every positive homology group.

For $Z\in G_r$, the comb recurrence takes the form
$$
L_{r+1}\operatorname{diag}(Z,1)L_{r+1}^* =c(L_rZL_r^*). \eqnum[eq:padding-compression]
$$
Under the leaf-coordinate group isomorphisms, this identity identifies standard stabilization with $c$. Since $c_*=0$, the standard inclusion induces zero on positive homology.
\end{proof}

The map $c$ used here is a group homomorphism. Its affine formula sends $0$ to $ff^*\neq0$, so it is not a ring homomorphism. Proposition~\ref{prop:zero-padding} concerns the specified standard inclusions; the coordinate isomorphisms of Lemma~\ref{lem:leaf-coordinates} induce homology isomorphisms.

\section{Ordered frames and simultaneous extensions}
\label{sec:common-cones}

We use completable ordered frames over nonzero unital $\F_2$-algebras; compare the special unimodular frames of \cite[\S2]{NS90} and \cite[\S2]{Suslin84}. Since a free module may have bases of different finite cardinalities, we specify the number of basis vectors in each complement.

\begin{definition}\label{def:ordered-frames}
Let $A$ be such an algebra and write $\Gamma_r=\GL_r(A)$. The ordered frame semisimplicial set $X_r(A)$ has as its $p$-simplices the tuples $(v_1,\ldots,v_k)$, where $k=p+1\leq r$, admitting a decomposition
$$
A^r=v_1A\oplus\cdots\oplus v_kA\oplus C,\qquad\text{with }C\cong A^{r-k}. \eqnum[eq:frame-decomposition]
$$
Each map $A\to v_iA$, $a\mapsto v_i a$, must be an isomorphism. Since $A\neq0$, the condition $C\cong A^{r-k}$ implies that $C\neq0$ when $k<r$ and $C=0$ when $k=r$. The simplex records the ordered vectors and the formal integer $r-k$; the complement and its basis are not recorded. Faces delete vectors, and there are no simplices in dimensions $p\geq r$.
\end{definition}

Deleting a vector adds its summand to the complement, so the faces in Definition~\ref{def:ordered-frames} are well defined. The condition $C\cong A^{r-k}$ specifies a basis cardinality, which need not be an invariant of the module $C$.

To prove the homology vanishing required in Theorem~\ref{thm:acyclicity-criterion}, we extend each finite family of frames by a common transverse vector; compare \cite[\S2.7, Remark~4]{vdK80} and \cite[\S2, Lemmas~2.1--2.2]{Suslin84}. The Reduction Theorem for Leavitt path algebras treats one nonzero element \cite[Lemma~2.2.8, Theorem~2.2.11 and Corollary~2.2.12]{AAS17}. For $R$, we prove below a simultaneous version: one right multiplier works for a finite family, and the resulting left inverses have nonzero kernels with explicit finite bases.

If $\mu_1,\ldots,\mu_s$ are distinct positive words of lengths at most $D$ and $M>D$, then the words $\mu_i e^M f$ are pairwise prefix-incomparable. Indeed, for comparable original words write $\mu_j=\mu_i\delta$ with $\delta$ nonempty. If $\delta$ contains $f$, its first $f$ occurs before position $M+1$; if $\delta=e^a$, the first $f$ in $\delta e^M f$ occurs at position $M+a+1$. In either case $e^M f$ and $\delta e^M f$ disagree before the shorter word ends. Original words that were prefix-incomparable remain so. Therefore
$$
(\mu_i e^M f)^*(\mu_j e^M f)=\delta_{ij}. \eqnum[eq:word-separation]
$$
In particular, a nonempty sum of distinct positive words is nonzero in $R$: append $e^M f$ and multiply on the left by the star of one of the resulting words to obtain $1$.

\begin{lemma}\label{lem:word-multiplier}
Given finitely many nonzero $a_1,\ldots,a_s\in R$, there are $x\in R$ and nonempty positive words $\eta_1,\ldots,\eta_s$ such that $\eta_i^*a_ix=1$ for every $i$. Each decomposition $R=a_ixR\oplus\ker\eta_i^*$ has an explicit nonzero complement with a finite basis consisting of positive words.
\end{lemma}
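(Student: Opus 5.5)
The plan is to reduce each $a_i$ to a "positive polynomial" after right multiplication by a single $x$, and then apply the separation identity~\eqref{eq:word-separation} to each $a_i$ separately.

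\textbf{Normal form.} Every element of $R$ is an $\F_2$-combination of monomials $\alpha\beta^*$ with $\alpha,\beta$ positive words, since $e^*e=f^*f=1$ and $e^*f=f^*e=0$ let us move starred letters to the right. Fix $N$ at least the maximal length of the $\beta$'s occurring in $a_1,\ldots,a_s$. Using $1=\sum_{|\gamma|=N'}\gamma\gamma^*$ for $N'=N-|\beta|$, pad each $\beta$ to length exactly $N$. This gives
$$
a_i=\sum_{|\gamma|=N}p^{(i)}_\gamma\gamma^*,
$$
where each $p^{(i)}_\gamma$ is a sum of positive words. Then $a_i\gamma=p^{(i)}_\gamma$, because $\gamma'^*\gamma=\delta_{\gamma\gamma'}$ for words of equal length.

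Since $a_i\neq0$, some $p^{(i)}_\gamma$ is nonzero. By the remark after~\eqref{eq:word-separation}, "nonzero" is equivalent to "nonempty as a sum of distinct words" once we cancel repeated words mod~$2$. So we may take each $p^{(i)}_\gamma$ to be a sum of distinct words.

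\textbf{Common multiplier and main obstacle.} We want one $x$ that works for all $i$ at once. Let $D$ exceed all word lengths appearing in the $p^{(i)}_\gamma$. Enumerate the $2^N$ words $\gamma$ as $\gamma_1,\ldots,\gamma_{2^N}$, put $\tau_k=e^{(D+1)k}f$, and set $x_0=\sum_k\gamma_k\tau_k$. Then
$$
a_ix_0=\sum_k p^{(i)}_{\gamma_k}\tau_k.
$$
The hard step is to show that this sum has no cancellation, so that it is a nonempty sum of distinct words. A word $w\tau_k$ with $|w|<D$ determines both $k$ and $w$. The final $f$ is preceded by a terminal run of $e$'s of length $(D+1)k+t$, where $0\le t\le|w|<D+1$. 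Hence $k=\lfloor\text{run}/(D+1)\rfloor$, and then $w$ is recovered by stripping $e^{(D+1)k}f$. So the words from different $k$ never collide. Within a fixed $k$ the words of $p^{(i)}_{\gamma_k}$ are already distinct, and the sum is nonempty because some $p^{(i)}_{\gamma_k}\neq0$.

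\textbf{Choosing $\eta_i$ and the complements.} Let $D'$ bound the lengths of all words in all $a_ix_0$, choose $M>D'$, and set $x=x_0e^Mf$. Pick any word $\mu_i$ occurring in $a_ix_0$ and put $\eta_i=\mu_ie^Mf$, which is nonempty. By~\eqref{eq:word-separation}, $\eta_i^*a_ix=1$, since the other words contribute $0$.

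Then $\pi_i=a_ix\eta_i^*$ is an idempotent, because $\pi_i^2=a_ix(\eta_i^*a_ix)\eta_i^*=\pi_i$. It satisfies $\pi_iR=a_ixR$ and $(1-\pi_i)R=\ker\eta_i^*$, which gives $R=a_ixR\oplus\ker\eta_i^*$.

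For the explicit complement, complete $\eta_i$ to a complete leaf set by splitting along the path of $\eta_i$: at each letter, keep the sibling branch as a leaf. The sibling words $\sigma_1,\ldots,\sigma_t$, with $t=|\eta_i|\ge1$, satisfy $\eta_i\eta_i^*+\sum_j\sigma_j\sigma_j^*=1$ by Lemma~\ref{lem:leaf-coordinates}. Therefore $1-\eta_i\eta_i^*=\sum_j\sigma_j\sigma_j^*$. Since $\eta_i^*\sigma_j=0$ and $\sigma_j^*\sigma_k=\delta_{jk}$, the words $\sigma_1,\ldots,\sigma_t$ form a basis of $\ker\eta_i^*=\bigoplus_j\sigma_jR$. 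This complement is nonzero and is spanned by positive words, as required.
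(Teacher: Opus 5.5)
Your proof is correct and follows essentially the paper's route: you write each $a_i$ in the form $\sum\alpha\beta^*$, pad the $\beta$'s to a common depth $N$, separate the blocks $a_i\gamma_k$ with suffixes $e^{ck}f$, append $e^Mf$ to make each support a prefix antichain, and take the sibling words of $\eta_i$ as a basis of $\ker\eta_i^*$. The differences are cosmetic. You recover the block index from the terminal run of $e$'s, whereas the paper uses disjoint length intervals. You also obtain the splitting from the idempotent $a_ix\eta_i^*$ rather than from the explicit coordinate maps $(t,z)\mapsto yt+z$ and $u\mapsto(bu,u-ybu)$.
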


\begin{proof}
Every element of $R$ is a finite sum of monomials $\alpha\beta^*$. To obtain this form, cancel every occurrence of a starred letter followed by an unstarred letter. Choose expressions of this form for the $a_i$, and choose $N$ at least as large as all lengths $|\beta|$ occurring in these expressions. Let $\gamma_1,\ldots,\gamma_t$ be all words of length $N$. Each $a_i\gamma_j$ is then a polynomial in positive words. For every $i$, at least one is nonzero, since $a_i=\sum_j(a_i\gamma_j)\gamma_j^*$, although the useful index $j$ may depend on $i$. Collect identical words over $\F_2$ in all these polynomials, and let $L$ bound the lengths of the remaining words.

Put $t_j=e^{j(L+1)}f$ and $x_0=\sum_j\gamma_jt_j$. The words contributed by $(a_i\gamma_j)t_j$ have lengths in $[j(L+1)+1,j(L+1)+1+L]$. These intervals are disjoint for distinct $j$, and concatenation by a fixed $t_j$ preserves distinct words within each interval. Thus $P_i=a_ix_0$ is a nonzero finite sum of distinct words in $e,f$ for every $i$.

We next make the support of each $P_i$ prefix-incomparable. Choose $M$ greater than every word length occurring in the $P_i$ and set $x=x_0e^Mf$. Equation~\eqref{eq:word-separation} makes the support of each $P_ie^Mf$ a prefix antichain. Choose $\eta_i$ in this support. Its coefficient is $1$, so $\eta_i^*a_ix=1$ by~\eqref{eq:word-separation}. Every $\eta_i$ is nonempty.

For a word $\eta=c_1\cdots c_h$, $h\geq1$, let $s_\ell=c_1\cdots c_{\ell-1}c'_\ell$, where $c'_\ell$ is the other binary letter. The words $\eta,s_1,\ldots,s_h$ form a complete leaf set. Hence
$$
\ker\eta^*=\bigoplus_{\ell=1}^h s_\ell R, \eqnum[eq:sibling-kernel]
$$
with inverse coordinate maps $(r_\ell)_\ell\mapsto\sum_\ell s_\ell r_\ell$ and $z\mapsto(s_\ell^*z)_\ell$. If $y=a_ix$ and $b=\eta_i^*$, then $by=1$, and
$$
R\oplus\ker b\rightleftarrows R,\qquad (t,z)\longmapsto yt+z\qquad\text{and}\qquad u\longmapsto(bu,u-ybu)
$$
are mutually inverse. The kernel in~\eqref{eq:sibling-kernel} is nonzero because $h\geq1$ and $s_\ell^*s_\ell=1$.
\end{proof}

\begin{proposition}\label{prop:frame-cone}
For a finite family of simplices of $X_r(R)$, each having at most $r-2$ frame vectors, there is a single vector $v$ extending every frame in the family to a simplex of $X_r(R)$. Each extended frame has a nonzero complement with an explicit basis of the cardinality required in Definition~\ref{def:ordered-frames}.
\end{proposition}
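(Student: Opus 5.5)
Given frames $\sigma_1,\ldots,\sigma_s$, each $\sigma_i=(v^i_1,\ldots,v^i_{k_i})$ with $k_i\le r-2$, fix for each $i$ a decomposition $R^r=v^i_1R\oplus\cdots\oplus v^i_{k_i}R\oplus C_i$ as in \eqref{eq:frame-decomposition}, together with an isomorphism $\theta_i:R^{r-k_i}\to C_i$. Let $\pi_i:R^r\to C_i$ be the projection along the frame summands. The plan is to choose one vector $v$ such that, for every $i$, the component $\pi_i(v)$ generates a free rank-one summand of $C_i$ whose complement in $C_i$ is nonzero and free with a basis of exactly $r-k_i-1$ elements.

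First, reduce to a single coordinate. Since $r-k_i\ge 2$, write $C_i\cong R^{r-k_i}=R\oplus R^{r-k_i-1}$. It suffices to find $v$ such that, for each $i$, the first coordinate $a_i$ of $\theta_i^{-1}\pi_i(v)$ satisfies the following: there are $x\in R$ and a word $\eta_i$ with $\eta_i^*a_ix=1$. The vector $v$ itself should be replaced by $vx$, which is still a single vector for the whole family. Then Lemma~\ref{lem:word-multiplier} gives $R=a_ixR\oplus\ker\eta_i^*$, and $\ker\eta_i^*\cong R^{h_i}$ by \eqref{eq:sibling-kernel}. To use it, each $a_i$ must be nonzero. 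Choose $v$ to be a generic vector, for instance $v=\sum_i\theta_i$-images arranged appropriately. A simpler route is to pick $v$ so that each $a_i\ne0$, which is possible because each condition $a_i\neq0$ excludes only a proper $\F_2$-subspace of $R^r$. A vector space over any field is not a finite union of proper subspaces when it is infinite-dimensional, as $R^r$ is. So a common $v$ with all $a_i\neq0$ exists. Then apply Lemma~\ref{lem:word-multiplier} simultaneously to $a_1,\ldots,a_s$ to get a single $x$, and replace $v$ by $vx$. Right multiplication by $x$ replaces each $a_i$ by $a_ix$.

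Second, fix the cardinality. After this step the remaining part of $C_i$ is $\theta_i(\ker\eta_i^*\oplus R^{r-k_i-1})$, which is free of rank $h_i+r-k_i-1$. It must instead have a basis of size $r-k_i-1$. Use the leaf coordinates of Lemma~\ref{lem:leaf-coordinates} on one copy of $R$: the $h_i+1$ summands $\ker\eta_i^*\oplus R$, taken when $r-k_i-1\ge1$, are isomorphic to $R$ via the complete leaf set $(s_1,\ldots,s_{h_i},1)$, suitably combined. Concretely, $R^{h_i}\oplus R\cong R$. So the complement becomes isomorphic to $R^{r-k_i-1}$, and it is nonzero since $r-k_i-1\ge1$. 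An explicit basis comes from composing $\theta_i$ with the leaf-coordinate maps. The map $t\mapsto vt$ is injective with the required splitting because $\eta_i^*a_ix=1$ gives a left inverse on the first coordinate.

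The main obstacle is making sure that $v\mapsto a_i$ is compatible across different frames. The map $v\mapsto\theta_i^{-1}\pi_i(v)$ is $\F_2$-linear, so the avoidance argument works. One also has to check that right multiplication by $x$ commutes with $\pi_i$ and $\theta_i$; it does, since these are module maps. Finally, the extended tuple $(v^i_1,\ldots,v^i_{k_i},v)$ must satisfy \eqref{eq:frame-decomposition}. This follows by assembling the summands.
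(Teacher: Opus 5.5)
Your second step is essentially sound. Write $c_i$ for the remaining coordinates of $\theta_i^{-1}\pi_i(v)$. Once $\eta_i^*a_ix=1$, the shear $(p,w)\mapsto(\eta_i^*p,\;p-a_ix\eta_i^*p,\;w-c_ix\eta_i^*p)$ gives $C_i=\pi_i(vx)R\oplus\theta_i(\ker\eta_i^*\oplus R^{r-k_i-1})$. Merging $\ker\eta_i^*\cong R^{h_i}$ with one of the $r-k_i-1\geq1$ remaining copies of $R$ then fixes the cardinality.

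The gap is in the first step. You obtain a single $v$ with every $a_i\neq0$ from the claim that an infinite-dimensional vector space over any field is not a finite union of proper subspaces. That claim is false over finite fields, and in particular over $\F_2$. If $f_1,f_2$ are linearly independent functionals on $V$, then $V=\ker f_1\cup\ker f_2\cup\ker(f_1+f_2)$. So once the family has three or more frames, your avoidance argument gives nothing, and appealing to $\F_2$-linearity in your last paragraph cannot rescue it.

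The missing idea is to use $R$-linearity together with the fact that $R^r$ is a cyclic right $R$-module. The map $\lambda_i\colon R^r\to R$ sending $v$ to the first coordinate of $\theta_i^{-1}\pi_i(v)$ is a surjective homomorphism of right $R$-modules. By Lemma~\ref{lem:leaf-coordinates}, $t\mapsto L_r^*t$ is an isomorphism $R\to R^r$, so the column $g=L_r^*$ generates $R^r$. If $\lambda_i(g)=0$, then $\lambda_i(R^r)=\lambda_i(g)R=0$, contradicting surjectivity. Hence $v=g$ makes every $a_i$ nonzero at once, with no avoidance argument.

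This is exactly what the paper does. It fixes $\Phi=L_r^*$ and sets $a_i=\rho_i\Phi(1)$, where $\rho_i$ collapses the whole complement to a single copy of $R$ through $L_{q_i}$. The new complement is then directly $\sigma_i\ker\eta_i^*\cong R^{h_i}\cong R^{q_i-1}$.

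A minor slip: $(s_1,\ldots,s_{h_i},1)$ is not a complete leaf set, since the empty word is a prefix of every $s_\ell$. For $R^{h_i}\oplus R\cong R$, use $(s_1,\ldots,s_{h_i},\eta_i)$ or a comb $L_{h_i+1}$.
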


\begin{proof}
For an empty family, choose any standard vertex. Otherwise, let $W_i$ be the span of the $i$th frame, with its given ordered basis, let $k_i$ be its length, and put $q_i=r-k_i\geq2$.

Choose a completion $R^r=W_i\oplus C_i^0$ with $C_i^0\cong R^{q_i}$. Compose the quotient coordinate map with $L_{q_i}:R^{q_i}\to R$ to obtain a surjection $\rho_i:R^r\to R$ with kernel $W_i$. Let $\sigma_i:R\to R^r$ be the corresponding section with image $C_i^0$, so that $\rho_i\sigma_i=1$.

Fix $\Phi=L_r^*:R\to R^r$. The endomorphism $\rho_i\Phi$ of the right module $R$ is left multiplication by an element $a_i$. Since $\rho_i\Phi$ is surjective and $R\ne0$, one has $a_i\ne0$. Apply Lemma~\ref{lem:word-multiplier} to the $a_i$, and put $v=\Phi x$, $y_i=\rho_i v=a_ix$, and $b_i=\eta_i^*$. Since $b_iy_i=1$, the map $R\to R^r$ defined by $a\mapsto va$ is a split injection, with left inverse $b_i\rho_i$ for every $i$.

Put $w_i=v-\sigma_i y_i\in W_i$. We have
$$
R^r=W_i\oplus vR\oplus\sigma_i\ker b_i, \eqnum[eq:cone-decomposition]
$$
with coordinate map and inverse
$$
\begin{aligned}
 (w,t,z)&\longmapsto w+vt+\sigma_i z\qquad\text{and}\\
 u&\longmapsto\begin{pmatrix}
 (I-\sigma_i\rho_i)u-w_i b_i\rho_i u\\
 b_i\rho_i u\\
 \rho_i u-y_i b_i\rho_i u
 \end{pmatrix}.
 \end{aligned} \eqnum[eq:cone-coordinate-inverses]
$$
The first inverse component lies in $W_i$ because $\rho_i\sigma_i=1$ and $\rho_iw_i=0$; the last lies in $\ker b_i$ because $b_iy_i=1$. If $u=w+vt+\sigma_i z$, then $\rho_i u=y_it+z$ and $b_i\rho_i u=t$, so the inverse recovers $(w,t,z)$. Conversely, substituting the displayed components into the forward map returns $u+(v-w_i-\sigma_i y_i)b_i\rho_i u=u$.

Let $h_i=|\eta_i|$, and let $S_i=(s_1,\ldots,s_{h_i})$ be the row in~\eqref{eq:sibling-kernel}. The map
$$
\sigma_iS_iL_{h_i}^*L_{q_i-1}:R^{q_i-1} \longrightarrow\sigma_i\ker b_i \eqnum[eq:complement-formal-coordinates]
$$
is an isomorphism, with inverse on this complement $L_{q_i-1}^*L_{h_i}S_i^*\rho_i$. By $\rho_i\sigma_i=1$, $S_i^*S_i=I_{h_i}$, and~\eqref{eq:leaf-identities}, the inverse product on $R^{q_i-1}$ is the identity. The other product is $\sigma_iS_iS_i^*\rho_i$, which is the identity on $\sigma_i\ker b_i$ by~\eqref{eq:sibling-kernel}. Since $q_i-1\geq1$ and $h_i\geq1$, the complement is nonzero. Thus~\eqref{eq:cone-decomposition} extends each specified frame to a simplex of $X_r(R)$.
\end{proof}

\begin{corollary}\label{cor:frame-homology}
For $r\geq3$, one has $\widetilde H_d(X_r(R),\Z)=0$ for $0\leq d\leq r-3$.
\end{corollary}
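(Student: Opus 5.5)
The plan is a standard coning argument in the simplicial chain complex, with Proposition~\ref{prop:frame-cone} supplying the cone point. Work with the ordered (semisimplicial) chain complex $C_*(X_r(R))$, augmented by $\varepsilon:C_0\to\Z$. Reduced homology is the homology of this augmented complex. Fix $0\le d\le r-3$ and let $z$ be a reduced $d$-cycle. For $d=0$ this means $\varepsilon(z)=0$. Then $z$ is a finite $\Z$-combination of $d$-simplices $\sigma_j=(v^j_1,\ldots,v^j_{d+1})$. Each has $d+1\le r-2$ frame vectors, and so do all their faces.

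Apply Proposition~\ref{prop:frame-cone} to the finite family consisting of the $\sigma_j$. This gives one vector $v$ such that every $(v^j_1,\ldots,v^j_{d+1},v)$ is a $(d+1)$-simplex of $X_r(R)$. For every face $\tau$ of a $\sigma_j$, deleting vectors shows that $(\tau,v)$ is also a simplex, since faces are well defined by the remark after Definition~\ref{def:ordered-frames}. This includes $\tau=\emptyset$, where $(v)$ is a vertex.

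Define the cone operator $h(\tau)=(-1)^{\dim\tau+1}(\tau,v)$, appending $v$ last, on the subcomplex spanned by these faces. With the appropriate sign convention, the face formula gives
\[
\partial(\tau,v)=(\partial\tau,v)+(-1)^{\dim\tau+1}\tau,
\]
where for a vertex $\tau$ the term $(\partial\tau,v)$ is read as $\varepsilon(\tau)\cdot(v)$. Applying this to $z$ yields
\[
\partial\Bigl(\sum_j n_j(\sigma_j,v)\Bigr)=\pm z+(\partial z,v)=\pm z,
\]
because $\partial z=0$. For $d=0$ the extra term is $\varepsilon(z)(v)=0$. Hence $z$ is a boundary, and $\widetilde H_d=0$.

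The only point needing care is the degree bound. The coning vector must produce a simplex of dimension $d+1\le r-2$. This requires $d+1\le r-2$ frame vectors in each $\sigma_j$, which is exactly the hypothesis of Proposition~\ref{prop:frame-cone}, and it gives $d\le r-3$. The condition $r\ge3$ ensures the range is nonempty, with the empty family handled at $d=0$ to show nonemptiness. The mild obstacle is that $v$ need not differ from the existing vectors in a formal sense. However, the proposition produces a genuine direct-sum decomposition, so $v$ is automatically transverse and $(\sigma_j,v)$ is a legitimate simplex; no further check is needed beyond the sign bookkeeping.
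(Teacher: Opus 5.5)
Your proposal is correct and is essentially the paper's proof: a cone on the finite support of the cycle, using the common extension vector from Proposition~\ref{prop:frame-cone}, with the bound $d+1\le r-2$ matching its hypothesis. The only differences are cosmetic. You append $v$ instead of prepending it, which is harmless because the direct-sum condition in Definition~\ref{def:ordered-frames} is symmetric in the frame vectors. You also apply the proposition only to the top simplices and reach the faces by deletion, which is equally valid.
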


\begin{proof}
Let $z$ be a finite ordered integral $d$-cycle, using the reduced chain complex when $d=0$. Its support, together with all faces of its simplices, is finite, and every frame in this family has at most $d+1\leq r-2$ vectors. Choose a common extension $v$ by Proposition~\ref{prop:frame-cone}. Prepending $v$ defines a chain operator $h_v$ on this finite family, and the ordered boundary satisfies $\partial h_v+h_v\partial=\id$. At the augmentation, set $h_v(1)=[v]$; the signs are the usual integral alternating signs. Since $\partial z=0$, one has $\partial h_vz=z$. The same augmented calculation for reduced zero-cycles proves connectedness.
\end{proof}

\begin{corollary}\label{cor:frame-simple-connectivity}
The realization $|X_4(R)|$ is connected and simply connected.
\end{corollary}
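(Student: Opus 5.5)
We prove simple connectivity of $|X_4(R)|$ with the same coning device as in Corollary~\ref{cor:frame-homology}, applied now to edge loops rather than chains. Connectedness is the case $d=0$ of Corollary~\ref{cor:frame-homology} with $r=4$, since $0\leq r-3=1$. It can also be seen directly: for two vertices $[u]$ and $[u']$, Proposition~\ref{prop:frame-cone} applied to the family $\{[u],[u']\}$ gives a vector $v$ such that $[v,u]$ and $[v,u']$ are edges, up to ordering. Frames with one vector have at most $r-2=2$ vectors, so the proposition applies.

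For $\pi_1$ we use the standard edge-path description of the fundamental group of the realization of a semisimplicial set. Every loop is homotopic to a finite edge loop $x_0,x_1,\ldots,x_n=x_0$, where consecutive vertices span a $1$-simplex in one of its two orders. Such a loop is null-homotopic if it can be filled by $2$-simplices. Let $\mathcal F$ be the finite family consisting of the vertex frames $(x_i)$ and the edge frames of the loop. Each edge frame has $2=r-2$ vectors, so Proposition~\ref{prop:frame-cone} provides a single vector $v$ that extends every member of $\mathcal F$. Thus $(v,x_i)$ is a $1$-simplex for each $i$. For each edge, say with ordered frame $(x_i,x_{i+1})$ or $(x_{i+1},x_i)$, the tuple with $v$ prepended is a $2$-simplex. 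Its three faces are that edge, $(v,x_i)$ and $(v,x_{i+1})$, because faces of a simplex of $X_4(R)$ delete vectors and the face frames are again simplices by the remark after Definition~\ref{def:ordered-frames}.

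The resulting $2$-simplices form a cone on the loop with apex $[v]$. Concretely, the edge loop is homotopic rel basepoint to the product of the paths $\gamma_i=(x_i\to v\to x_{i+1})$, each of which is homotopic across its $2$-simplex to the edge $x_i x_{i+1}$. The spokes $v\to x_i$ then cancel in consecutive pairs, leaving the backtracking loop $x_0\to v\to x_0$, which is trivial. Hence every loop is null-homotopic.

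The only point that needs care is that one common apex is required for all edges at once. A separate extension for each edge would not fit together, since the spokes $v\to x_i$ must be shared between neighbouring triangles. The simultaneous statement of Proposition~\ref{prop:frame-cone} supplies exactly this, under the bound of at most $r-2$ vectors. This is why the argument needs rank $4$ rather than rank $3$: with $r=3$ the edge frames would have $3>r-2$ vectors.
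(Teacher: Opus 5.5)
Your proof is correct and follows the paper's argument: a single common apex $v$ from Proposition~\ref{prop:frame-cone} for all vertices and traversed ordered edges, a triangle obtained by prepending $v$ to each traversed edge, and the resulting fan with shared radial edges $(v,x_i)$ contracting the edge loop; connectedness comes from the same proposition. One small slip in your closing remark: for $r=3$ an edge frame has $2$ vectors, not $3$, and the obstruction is $2>r-2=1$.
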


\begin{proof}
By Proposition~\ref{prop:frame-cone}, any two vertices admit a common apex and are therefore joined by a path, so $|X_4(R)|$ is connected. Let a finite edge loop be given. Choose one common apex $v$ for all its vertices and ordered edges, with complements for the resulting frames. For each ordered edge $(u,w)$ traversed by the loop, use the triangle $(v,u,w)$. Adjacent triangles share their radial edges and therefore form a continuous triangular fan whose boundary is the given loop. A reverse traversal uses the same ordered edge with the opposite orientation; it does not replace that edge by the distinct cell $(w,u)$. Repeated vertices cause no difficulty for the resulting map of a disk. By cellular approximation, every element of the fundamental group is represented by a finite edge loop, and the fan contracts it. Thus $|X_4(R)|$ is simply connected.
\end{proof}

\section{An acyclicity criterion and its application}
\label{sec:acyclicity-criterion}

Throughout this section, $A$ is a nonzero unital $\F_2$-algebra, $\Gamma_r=\GL_r(A)$, and $X_r(A)$ is the ordered frame semisimplicial set of Definition~\ref{def:ordered-frames}.

\begin{theorem}\label{thm:acyclicity-criterion}
Suppose that $A^2\cong A$ as right $A$-modules. Assume also that:
\begin{enumerate}
\item for every $n>0$, the standard inclusion $\Gamma_{n+2}\to\Gamma_{n+3}$, $g\mapsto\operatorname{diag}(g,1)$, induces zero on $H_n(-,\Z)$;
\item $\widetilde H_i(X_r(A),\Z)=0$ for $0\leq i\leq r-3$ and $r\geq4$.
\end{enumerate}
Then $H_n(A^\times,\Z)=0$ for every $n>0$.
\end{theorem}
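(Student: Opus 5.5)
We argue by induction on $n$, in the style of Quillen's stability argument. Since $A^2\cong A$, we have $A^m\cong A$ for all $m\geq1$, so $\Gamma_r\cong A^\times$ for every $r$ and each $H_n(\Gamma_r)$ is abstractly $H_n(A^\times)$. It therefore suffices to show $H_n(\Gamma_r)=0$ for one suitable $r$, say $r=n+3$. The inductive hypothesis is that $H_i(\Gamma_s)=0$ for $0<i<n$ and all $s$; the case $n=1$ has vacuous hypothesis.

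Fix $n\geq1$ and set $r=n+3\geq4$. Let $\Gamma_r$ act on $X_r(A)$. By hypothesis~(2), $X_r(A)$ is $(r-3)=n$-acyclic, so the equivariant spectral sequence
$$E^1_{p,q}=\bigoplus_{\sigma\in X_r(A)_p/\Gamma_r}H_q(\operatorname{Stab}\sigma)\Rightarrow H_{p+q}(\Gamma_r)$$
(augmented, with $p=-1$ the column $H_q(\Gamma_r)$) converges to zero in total degrees $\leq n$. The action is transitive on $p$-simplices for each $p$: any two frames of length $k$ with complements free of rank $r-k$ differ by an element of $\Gamma_r$, obtained by matching bases. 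The stabilizer of a $(p)$-simplex with $k=p+1$ vectors is $\Hom(A^{r-k},A^k)\rtimes\Gamma_{r-k}$, in block-matrix form $\begin{pmatrix}I&*\\0&g\end{pmatrix}$. There is some ambiguity about which complement is chosen, but any choice gives a conjugate description. The edge map $d^1\colon E^1_{0,q}\to E^1_{-1,q}$ is induced by the inclusion of the vertex stabilizer, which contains $\Gamma_{r-1}$ embedded via the standard stabilization.

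The key step is to show that $H_q(\operatorname{Stab}\sigma)\cong H_q(\Gamma_{r-k})$ in the relevant range. For this we use the Hochschild--Serre spectral sequence for the normal subgroup $N=\Hom(A^{r-k},A^k)$, which is an $\F_2$-vector space. Following Quillen's finite-field trick, we find an $\F_4\subset A$ (from $A\cong M_2(A)$, which contains $M_2(\F_2)\supset\F_4$) or scalars acting on $N$ by right multiplication, commuting with $\Gamma_{r-k}$. The aim is to show that $H_j(\Gamma_{r-k};H_i(N))=0$ for $i>0$: a central element $\lambda$ acting trivially on group homology but by a nontrivial scalar on $H_i(N)=\Lambda^i N$ (via weights) forces vanishing. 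I expect this to be the main obstacle, because the field is not central, so the weight argument must be carried out with $\lambda$ acting on $N$ through the action on frames, as the introduction indicates. Granting it, $E^1_{p,q}\cong H_q(\Gamma_{r-p-1})$ with $r-p-1\geq$ the needed rank.

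With that identification, the induction hypothesis kills all $E^1_{p,q}$ with $0<q<n$. Consider total degree $n$. The term $E^1_{-1,n}=H_n(\Gamma_r)$ can only be hit by $d^1$ from $E^1_{0,n}=H_n(\Gamma_{r-1})$, since the higher differentials $d^s$ into $E_{-1,n}$ come from $E_{s-1,n-s+1}$, whose $q$-index is strictly between $0$ and $n$ and hence vanishes. The column $q=0$ terms do not reach it either. This $d^1$ is the alternating sum of face maps. After the identification with stabilizers, each face map is a conjugate of the standard inclusion $\Gamma_{r-1}\to\Gamma_r$, i.e.\ $\Gamma_{n+2}\to\Gamma_{n+3}$, which is zero on $H_n$ by hypothesis~(1). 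Thus $d^1=0$, so $E^\infty_{-1,n}=H_n(\Gamma_r)$. Since the abutment vanishes in degree $n\leq r-3$, we conclude $H_n(\Gamma_r)=0$, and therefore $H_n(A^\times)=0$.
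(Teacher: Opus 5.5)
Your outer framework is sound: the augmented equivariant spectral sequence for $X_{n+3}(A)$, transitivity with stabilizers $J_{k,q}(A)$, the vanishing of $E^2_{n,0}$, and hypothesis~(1) for the differential out of the vertex column. The gap is the step you grant, and the mechanism you propose for it does not work. Because $F\subset A$ is not central, no element of $J_{k,q}(A)$ acts on $U=M_{k,q}(A)$ by a scalar while fixing $\Gamma_q$.

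\begin{itemize}
\item Right multiplication $B\mapsto B\lambda$ is inner in $J_{k,q}(A)$, via conjugation by $\operatorname{diag}(I_k,\lambda^{-1}I_q)$. However, it does not commute with the action $B\mapsto BH^{-1}$, and it imposes no weight constraint.
\item Left multiplication $B\mapsto\lambda B$ does commute with $\Gamma_q$. However, it is conjugation by $\operatorname{diag}(\lambda I_k,I_q)\notin J_{k,q}(A)$, so nothing forces it to act trivially on $H_*(J_{k,q}(A))$.
\end{itemize}

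The weight argument therefore yields only that $Q_t=\ker\bigl(H_t(J_{k,q}(A))\to H_t(\Gamma_q)\bigr)$ is $2$-primary with zero $C$-coinvariants, and that $H_t(J_{k,q}(A))_C\cong H_t(\Gamma_q)$. It does not give $H_t(J_{k,q}(A))\cong H_t(\Gamma_q)$. So you cannot conclude that the terms $E^1_{p,n-p}$ vanish for $1\le p<n$.

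In addition, $\F_4$ is too small. On $H_j(U,\F_2)$ the weights are sums of $j$ powers of $2$ modulo $2^m-1$; $H_j(U,\Z)$ injects into this for $j>0$, and it is not $\Lambda^jU$ in general. For $m=2$ you already get $1+2\equiv0$ in degree $2$. You need $F=\F_{2^m}$ with $m>n$, which $A\cong M_m(A)$ provides.

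The missing idea is a $C$-action on the whole spectral sequence that is trivial on the $\Gamma_r$ side, so that coinvariant information suffices. Use right multiplication of frame vectors, $T_\lambda(v_1,\ldots,v_k)=(v_1\lambda,\ldots,v_k\lambda)$.

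\begin{itemize}
\item It preserves frames, since $v_i\lambda A=v_iA$. It commutes with faces and with $\Gamma_r$, and hence acts on the double complex.
\item It is trivial on the augmentation column $E_{-1,*}=H_*(\Gamma_r)$.
\item On $E^1_{p,t}=H_t(J_{p+1,r-p-1}(A))$ it induces $B\mapsto\lambda^{-1}B$. This comes from transporting back by $\operatorname{diag}(\lambda I_k,I_{r-k})^{-1}$; the face maps are compatible up to inner automorphisms of the stabilizers.
\end{itemize}

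For $1\le p<n$, each $d^{p+1}\colon E^{p+1}_{p,n-p}\to E^{p+1}_{-1,n}$ is then a $C$-equivariant map into a module with trivial action. Its source is a $C$-subquotient of $Q_{n-p}$, which still has zero coinvariants because $|C|$ is odd and the module is $2$-primary. Hence the map is zero.

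For $d^1$ out of $E^1_{0,n}$, replace your isomorphism by the identity $i\alpha_\lambda=c_{t_\lambda}i$. Here $i\colon J_{1,r-1}(A)\to\Gamma_r$ is the inclusion, $\alpha_\lambda$ is the left scalar automorphism, and $c_{t_\lambda}$ is conjugation by $t_\lambda=\operatorname{diag}(\lambda,I_{r-1})$. This identity makes $i_*$ factor through $H_n(J_{1,r-1}(A))_C\cong H_n(\Gamma_{r-1})$, and thus through the standard inclusion, which is zero by~(1).

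With these substitutions your argument goes through. The paper runs the same mechanism on the Borel construction $E\Gamma_r\times_{\Gamma_r}|X_r(A)|$. There the $C$-action on $H_n$ is trivial because the projection to $B\Gamma_r$ is an $H_n$-isomorphism, while every graded piece of the filtration has zero coinvariants.
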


To control integral homology and the coefficient modules below, we transfer Quillen's scalar weight calculation through reduction modulo two; see \cite[\S11, Lemmas~15--16]{Quillen72}.

\begin{lemma}\label{lem:scalar-homology}
Let $m\geq1$, let $F=\F_{2^m}$ and $C=F^\times$, and regard an $F$-vector space $V$ as a discrete additive group. For $1\leq j<m$, the group $H_j(V,\Z)$ is annihilated by $2$, and the norm element $N_C=\sum_{c\in C}[c]\in\Z[C]$ acts as zero on it. If a group $H$ acts $F$-linearly on $V$, then $N_C$ acts as zero on $H_i(H,H_j(V,\Z))$ for every $i\geq0$. These homology groups and all their $C$-stable subquotients have zero $C$-coinvariants. The same conclusions hold for inverse scalar multiplication.
\end{lemma}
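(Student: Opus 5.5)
The plan is to reduce to mod-$2$ homology, decompose it into weight spaces for $C$ after extending scalars to $\overline{\F}_2$, and check that no weight in degrees $1\leq j<m$ is trivial.

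\emph{Reduction to finite dimension and to $\F_2$-coefficients.} Homology commutes with filtered colimits. Write $V$ as the union of its finite-dimensional $F$-subspaces; each is $C$-stable. So it suffices to treat $V\cong F^d$, provided every conclusion is natural in $C$-equivariant $F$-linear inclusions, which it will be.

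For $V\cong(\Z/2)^{md}$, the Künneth formula applied to copies of $\Z/2$ shows that $H_j(V,\Z)$ has exponent $2$ for $j>0$: all Tor terms are $\F_2$-vector spaces. Hence $H_j(V,\Z)=H_j(V,\Z)\otimes\F_2$. By the universal coefficient theorem, this embeds $C$-equivariantly into $H_j(V,\F_2)$. It therefore suffices to show that $N_C$ acts as zero on $H_j(V,\F_2)$ for $1\leq j<m$.

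\emph{Weight calculation, following Quillen.} Extend scalars to $\overline{\F}_2$. Then
$$V\otimes_{\F_2}\overline{\F}_2\cong\bigoplus_{k=0}^{m-1}V^{(k)},$$
where $c\in C$ acts on the Frobenius twist $V^{(k)}$ by the character $c\mapsto c^{2^k}$. The group $H_j(V,\F_2)\otimes\overline{\F}_2$ is the degree-$j$ part of the divided power/exterior coalgebra on $V\otimes\overline{\F}_2$. Via the usual filtration, it is a subquotient of the $j$-fold tensor power. So $C$ acts through characters $c\mapsto c^{2^{k_1}+\cdots+2^{k_j}}$ with $0\leq k_i<m$.

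Such a character is trivial only if $2^{k_1}+\cdots+2^{k_j}\equiv0\pmod{2^m-1}$. Modulo $2^m-1$, multiplication by $2$ cyclically rotates $m$-bit strings. Reducing a sum of $j$ powers of $2$ through carries never increases the number of nonzero bits. A positive multiple of $2^m-1$ reduces to the all-ones string, which has $m$ bits, so $j\geq m$ is required. Hence for $j<m$ every weight is nontrivial.

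Since $C$ is cyclic of odd order, the $C$-action on each (finite-dimensional) graded piece is semisimple over $\overline{\F}_2$, and $N_C$ acts on a $\chi$-isotypic piece by $\sum_{c\in C}\chi(c)=0$ for $\chi\neq1$. Thus $N_C=0$. I expect this weight bookkeeping, namely the carry argument combined with identifying the homology as a subquotient of tensor powers compatibly with $C$, to be the main obstacle. I would cite Quillen \cite[\S11, Lemmas~15--16]{Quillen72} for the exact form.

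\emph{Equivariant consequences.} If $H$ acts $F$-linearly, then $C$ acts on $V$ by scalars commuting with $H$. So $C$ acts on $M=H_j(V,\Z)$ by $H$-module maps, and hence on $H_i(H,M)$ by functoriality in the coefficients. The action of $N_C$ on $H_i(H,M)$ is induced by $N_C$ on $M$, which is zero.

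All these groups are killed by $2$, and $|C|=2^m-1$ is odd, hence invertible. For any $C$-module $P$ with $|C|$ invertible, the coinvariants $P_C$ are identified with $N_CP$ via $\tfrac1{|C|}N_C$. Since $N_C$ vanishes on $P$, and therefore on every $C$-stable subquotient of $P$, all these coinvariants are zero.

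Finally, inverse scalar multiplication is the same action precomposed with the automorphism $c\mapsto c^{-1}$ of $C$. This fixes $N_C$ and preserves $C$-stable subquotients, so every conclusion carries over.
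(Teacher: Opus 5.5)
Your proposal is correct and follows essentially the same route as the paper: exponent two, the natural injection into mod-two homology, Quillen's weight computation with the cyclic carrying argument, vanishing of the norm on nontrivial characters, additivity of $H_i(H,-)$, and the odd order of $C$ for the coinvariants. The differences are cosmetic. You obtain exponent two from the split K\"unneth formula, where the paper uses a chain-level null-homotopy of multiplication by $2$. You read the weights off $H_j(V,\F_2)$ as a subquotient of the $j$-th tensor power, whereas the paper dualizes the polynomial cohomology ring.
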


\begin{proof}
Suppose first that $V$ is finite dimensional over $F$. Its additive group is a finite product of copies of $C_2$. The integral cellular chain complex of $BC_2=\mathbb{RP}^{\infty}$ has one copy of $\Z$ in each nonnegative degree, with differential $2$ in positive even degrees and zero in odd degrees. It is the direct sum of $\Z[0]$ and two-term complexes $\Z\xrightarrow{2}\Z$ in degrees $(2a,2a-1)$, $a\geq1$. Multiplication by $2$ on each two-term complex is null-homotopic: take the identity map from degree $2a-1$ to degree $2a$ as the homotopy.

The product chain complex is the tensor product of these complexes. Every summand other than $\Z[0]$ contains a two-term factor. Applying the preceding homotopy to one such factor, with its tensor sign, makes multiplication by $2$ null-homotopic on that summand; the cross terms cancel. Consequently $2H_j(V,\Z)=0$ for $j>0$. In the long exact sequence associated with $\Z\xrightarrow{2}\Z\to\F_2$, the kernel of reduction modulo two is the image of multiplication by $2$. Thus reduction induces a natural injection
$$
H_j(V,\Z)\longrightarrow H_j(V,\F_2)\qquad\text{for }j>0, \eqnum[eq:integral-mod-two]
$$
which commutes with every automorphism of $V$.

The mod-two cohomology of $V$ is the polynomial algebra on $H^1(V,\F_2)=\Hom_{\F_2}(V,\F_2)$. Indeed, $H^*(C_2,\F_2)=\F_2[t]$ with $|t|=1$, and this description extends to a finite product by the field K\"unneth formula. Since the algebra is generated by degree one, the description is natural under all linear automorphisms of $V$.

After extending scalars to a splitting field, the eigencharacters of scalar multiplication on $V$ are $\lambda\mapsto\lambda^{2^a}$, $0\leq a<m$, each with multiplicity $\dim_F V$. They may also be read from the isomorphism
$$
F\otimes_{\F_2}F\longrightarrow\prod_{a=0}^{m-1}F,\qquad x\otimes y\longmapsto(x^{2^a}y)_a.
$$
With the contragredient convention the cohomology generators have weights $-2^a$. Dualizing each finite cohomological degree shows that the weights on $H_j(V,\F_2)$ are sums of exactly $j$ members of $\{1,2,4,\ldots,2^{m-1}\}$, with repetitions, modulo $d=2^m-1$.

Quillen's counting lemma \cite[Lemma~16]{Quillen72}, specialized to $p=2$, shows that no such sum is zero modulo $d$ when $0<j<m$. Its cyclic carrying argument is as follows. Place one counter for each summand in the corresponding cyclic position of $\Z/m\Z$. Replace two counters in one position by one counter in the next. Each replacement preserves the residue modulo $d$, including the replacement in the last position, and strictly decreases the positive number of counters. The process terminates at a nonempty set of distinct positions with fewer than $m$ counters. Its ordinary sum lies between $1$ and $d-1$, so its residue is nonzero.

The order $d$ is odd, and the norm vanishes on every nontrivial character in characteristic two. By the weight calculation, $N_C$ acts as zero on $H_j(V,\F_2)$ for $1\leq j<m$, and hence on $H_j(V,\Z)$ by~\eqref{eq:integral-mod-two}. Inverse scalar multiplication replaces every weight by its negative and leaves the conclusion unchanged.

For arbitrary $V$, its finite-dimensional $F$-subspaces form a directed system of $C$-stable subgroups. Every finite bar chain is contained in one of them. Since filtered colimits of abelian groups are exact, the bar complex shows that group homology commutes with this union. The exponent and norm assertions therefore hold without a finiteness assumption on $V$.

If $H$ acts $F$-linearly, its action commutes with $C$. On the bar complex computing $H_i(H,H_j(V,\Z))$, the norm acts coefficientwise as zero, and hence induces zero on homology. These homology groups are annihilated by $2$, and both properties pass to $C$-stable subquotients. On coinvariants the norm acts as multiplication by the odd integer $d$, hence as the identity on a group annihilated by $2$. The coinvariants are therefore zero.
\end{proof}

If $C$ has odd order $d$ and $M$ is a $2$-primary $C$-module, multiplication by $d$ is invertible on $M$. The averaging operator $d^{-1}N_C$ identifies invariants and coinvariants and makes both functors exact on this category. Thus, if $M_C=0$, every $C$-stable subquotient of $M$ has zero coinvariants. For a module annihilated by $2^a$, the inverse of $d$ may be represented by an integer inverse modulo $2^a$. We will use this exactness for a kernel whose graded pieces are annihilated by $2$ but whose exponent is bounded by $2^n$.

Assume now that $A^2\cong A$. Then $A^m\cong A$ for every $m\geq1$, by induction using $A^{m+1}\cong A^m\oplus A$. Represent one such isomorphism by a row $L$ and its inverse by a column $L'$, so that $LL'=1$ and $L'L=I_m$. Then $T\mapsto LTL'$ and $a\mapsto L'aL$ are mutually inverse unital ring isomorphisms between $M_m(A)$ and $A$, and hence $\Gamma_m\cong A^\times$.

Composing the regular representation $\F_{2^m}\to M_m(\F_2)$ with the entrywise inclusion $M_m(\F_2)\hookrightarrow M_m(A)$ and the isomorphism $M_m(A)\to A$ embeds $\F_{2^m}$ into $A$. Indeed, the composite is unital and therefore injective because $A\neq0$.

For $k,q\geq1$, put
$$
J_{k,q}(A)=
 \left\{\begin{pmatrix}I_k&B\\0&H\end{pmatrix}:
 B\in M_{k,q}(A)\text{ and }H\in\Gamma_q\right\}. \eqnum[eq:frame-stabilizer]
$$
Write $U=M_{k,q}(A)_{\mathrm{add}}$. The extension $1\to U\to J_{k,q}(A)\xrightarrow{\pi}\Gamma_q\to1$ splits by $s(H)=\operatorname{diag}(I_k,H)$. If $u(B)=\left(\begin{smallmatrix}I_k&B\\0&I_q\end{smallmatrix}\right)$, then $s(H)u(B)s(H)^{-1}=u(BH^{-1})$. Thus the left $\Gamma_q$-action on $U$ is $B\mapsto BH^{-1}$. In particular,
$$
\begin{pmatrix}I_k&B\\0&H\end{pmatrix}
 =u(BH^{-1})s(H)\qquad\text{and}\qquad
 \begin{pmatrix}I_k&B\\0&H\end{pmatrix}^{-1}
 =\begin{pmatrix}I_k&-BH^{-1}\\0&H^{-1}\end{pmatrix}.
$$

Fix $n>0$, choose $m>n$, and embed $F=\F_{2^m}$ into $A$ as above. Let $C=F^\times$. Conjugation by $t_\lambda=\operatorname{diag}(\lambda I_k,I_q)$ fixes $s(\Gamma_q)$ pointwise and sends $B\in U$ to $\lambda B$.

The next calculation is a form of the scalar argument for affine groups; compare \cite[Theorem~1.9]{Suslin84}, \cite[Proposition~1.10, Theorem~1.11 and Remark~1.13]{NS90}, \cite[\S2]{Schlichting17}, and \cite[Proposition~2.1]{Knudson02}. The role of central scalars in the Nesterenko--Suslin argument is noted in \cite[Definition~2.1, footnote~1]{Schlichting17}. Here $F$ need not be central in $A$: left scalar multiplication commutes with the $\Gamma_q$-action, since $\lambda(BH^{-1})=(\lambda B)H^{-1}$.

\begin{lemma}\label{lem:stabilizer-scalars}
For the scalar action just defined, let
$$
Q_n=\ker\bigl(H_n(J_{k,q}(A),\Z)\xrightarrow{\pi_*}H_n(\Gamma_q,\Z)\bigr).
$$
Then $2^nQ_n=0$, $(Q_n)_C=0$, and the projection induces an isomorphism
$$
H_n(J_{k,q}(A),\Z)_C\cong H_n(\Gamma_q,\Z). \eqnum[eq:stabilizer-coinvariants]
$$
If the standard inclusion $\Gamma_q\to\Gamma_{k+q}$, $H\mapsto\operatorname{diag}(H,I_k)$, induces zero on $H_n(-,\Z)$, then the inclusion $J_{k,q}(A)\to\Gamma_{k+q}$ also induces zero on $H_n(-,\Z)$. All these assertions remain valid when the scalar actions are inverted.
\end{lemma}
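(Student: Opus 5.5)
We will use the Lyndon--Hochschild--Serre spectral sequence of the split extension $1\to U\to J_{k,q}(A)\xrightarrow{\pi}\Gamma_q\to1$,
$$
E^2_{i,j}=H_i\bigl(\Gamma_q,H_j(U,\Z)\bigr)\Longrightarrow H_{i+j}(J_{k,q}(A),\Z).
$$
Conjugation by $t_\lambda$ is an automorphism of this extension. It fixes $s(\Gamma_q)$ and acts on $U=M_{k,q}(A)$ by left multiplication by $\lambda$. This action is $F$-linear and commutes with the $\Gamma_q$-action $B\mapsto BH^{-1}$. Hence $C$ acts on the whole spectral sequence, compatibly with differentials, and on the abutment it acts by inner automorphisms of $\Gamma_{k+q}$ restricted to $J$. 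It acts trivially on the bottom row $E_{i,0}=H_i(\Gamma_q,\Z)$, since it fixes $s(\Gamma_q)$ pointwise.

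We regard $U$ as an $F$-vector space through this left multiplication. Lemma~\ref{lem:scalar-homology}, with $m>n$, then says the following. For $1\leq j\leq n<m$, each $E^2_{i,j}$ is annihilated by $2$, its norm action is zero, and every $C$-stable subquotient has zero $C$-coinvariants. In particular this holds for $E^\infty_{i,j}$ with $j\geq1$ and $i+j=n$.

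Because the extension splits, $\pi_*$ is surjective and the edge map onto $E^\infty_{n,0}=E^2_{n,0}$ is $\pi_*$. So $Q_n$ has a filtration of length at most $n$ with graded pieces $E^\infty_{n-j,j}$, $1\leq j\leq n$. Each piece is killed by $2$, which gives $2^nQ_n=0$. Thus $Q_n$ is a $2$-primary $C$-module with $C$ of odd order $d$. By the averaging remark after Lemma~\ref{lem:scalar-homology}, coinvariants are exact on such modules. Hence $(Q_n)_C$ has a filtration with pieces $(E^\infty_{n-j,j})_C=0$, so $(Q_n)_C=0$.

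Next we take $C$-coinvariants of the split short exact sequence $0\to Q_n\to H_n(J)\to H_n(\Gamma_q)\to0$. Coinvariants are right exact, the $C$-action on $H_n(\Gamma_q)$ is trivial, and $(Q_n)_C=0$. Together these give \eqref{eq:stabilizer-coinvariants}; the splitting also supplies injectivity directly.

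For the inclusion statement, let $\iota:J\to\Gamma_{k+q}$ be the inclusion. Each $t_\lambda$ lies in $\Gamma_{k+q}$, so $\iota_*$ is $C$-invariant and factors through $H_n(J)_C\cong H_n(\Gamma_q)$. Explicitly, $\iota_*(x)=\iota_*(s_*\pi_*x)$, because $x-s_*\pi_*x\in Q_n$ and $\iota_*$ vanishes on $Q_n$: indeed $\iota_*$ kills $(1-\lambda)Q_n$, which is all of $Q_n$ by $(Q_n)_C=0$. So $\iota_*=(\iota s)_*\pi_*$, where $\iota s:H\mapsto\operatorname{diag}(I_k,H)$. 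This map is conjugate in $\Gamma_{k+q}$, by a block permutation matrix, to the standard inclusion $H\mapsto\operatorname{diag}(H,I_k)$. That inclusion induces zero by hypothesis, so $\iota_*=0$.

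For inverted scalars, we replace $\lambda$ by $\lambda^{-1}$ throughout; Lemma~\ref{lem:scalar-homology} covers the inverse action. The main point needing care is the compatibility claim: that $C$ acts on the spectral sequence through its action on $U$ alone, trivially on $\Gamma_q$ and with commuting actions. This is exactly what the non-central setting requires, and it holds because $t_\lambda$ normalizes $U$ and centralizes $s(\Gamma_q)$.
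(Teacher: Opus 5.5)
Your proposal is correct and follows essentially the same route as the paper: the $C$-equivariant Lyndon--Hochschild--Serre spectral sequence of the split extension, Lemma~\ref{lem:scalar-homology} applied to the rows $1\leq j\leq n<m$, exactness of coinvariants on $2$-primary modules to obtain $(Q_n)_C=0$, and factoring the inclusion through the coinvariants before conjugating by a block permutation. The differences are cosmetic. You get survival of the bottom row from surjectivity of $\pi_*$ rather than from naturality with the section, and you write the factorization explicitly as $\iota_*=(\iota s)_*\pi_*$. Strictly, $Q_n$ equals the sum of the subgroups $(1-\lambda)Q_n$ over $\lambda\in C$, or simply $(1-\lambda_0)Q_n$ for a generator $\lambda_0$ of the cyclic group $C$.
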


\begin{proof}
Associated with the split extension is the $C$-equivariant Lyndon--Hochschild--Serre spectral sequence
$$
E^2_{p,j}=H_p(\Gamma_q,H_j(U,\Z)) \ \Longrightarrow\ H_{p+j}(J_{k,q}(A),\Z). \eqnum[eq:stabilizer-lhs]
$$
We use its usual homological indexing and projection edge map; see \cite[Theorem~6.8.2]{Weibel94}. The row $j=0$ survives unchanged to $E^\infty$. Indeed, the section and projection compare it with the spectral sequence of the extension of $\Gamma_q$ with trivial kernel and induce the identity on that row. There are no incoming differentials, and naturality with the section makes the outgoing differentials zero.

Consequently $Q_n=F_{n-1}H_n(J_{k,q}(A),\Z)$, with graded pieces $E^\infty_{p,n-p}$ for $0\leq p<n$. Each is a $C$-stable subquotient of $E^2_{p,n-p}$ in~\eqref{eq:stabilizer-lhs}, where $1\leq n-p\leq n<m$. By Lemma~\ref{lem:scalar-homology}, these pieces are annihilated by $2$ and have zero $C$-coinvariants. There are at most $n$ pieces, so $2^nQ_n=0$. Exactness of coinvariants on $2$-primary modules, applied successively to the filtration, implies $(Q_n)_C=0$.

The section is fixed by $C$ and splits $\pi_*$. Together with $(Q_n)_C=0$, this proves~\eqref{eq:stabilizer-coinvariants}. Let $i:J_{k,q}(A)\to\Gamma_{k+q}$ be the inclusion. If $\alpha_\lambda$ denotes the scalar automorphism of $J_{k,q}(A)$, then $i\alpha_\lambda=c_{t_\lambda}i$, where $c_{t_\lambda}$ is conjugation in $\Gamma_{k+q}$. Since inner automorphisms induce the identity on homology, $i_*$ is $C$-invariant and therefore factors through the coinvariant quotient $H_n(J_{k,q}(A),\Z)_C$. Under~\eqref{eq:stabilizer-coinvariants}, the resulting map is induced by $H\mapsto\operatorname{diag}(I_k,H)$. A block permutation conjugates this map to $H\mapsto\operatorname{diag}(H,I_k)$, the specified standard inclusion, whose map on $H_n$ is zero by hypothesis. Inverting the scalar actions permutes the summands of the norm and preserves the argument.
\end{proof}

\begin{proof}[Proof of Theorem~\ref{thm:acyclicity-criterion}]
Let $\mathcal B_r=E\Gamma_r\times_{\Gamma_r}|X_r(A)|$. For a free right $\Z\Gamma_r$-resolution $P_*$ of $\Z$, the double complex $P_t\otimes_{\Z\Gamma_r}C_p(X_r(A),\Z)$ computes $H_*(\mathcal B_r,\Z)$. The spectral sequence obtained by taking homology first in the frame direction has $E^2_{a,b}=H_a(\Gamma_r,H_b(X_r(A),\Z))$. By the second hypothesis, the projection $\mathcal B_r\to B\Gamma_r$ induces an isomorphism on $H_n$ whenever $r\geq n+3$.

For the other filtration, $\Gamma_r$ acts transitively on the ordered $(k-1)$-simplices: an isomorphism from $A^{r-k}$ onto the complement in~\eqref{eq:frame-decomposition} completes a given frame to an invertible matrix. The stabilizer of the standard $k$-frame is $J_{k,r-k}(A)$, and the stabilizer of a full frame is trivial; put $J_{r,0}(A)=1$. Since the frames are ordered, these stabilizers fix their simplices pointwise. By Shapiro's lemma, the spectral sequence is
$$
E^1_{p,t}=H_t(J_{p+1,r-p-1}(A),\Z) \ \Longrightarrow\ H_{p+t}(\mathcal B_r,\Z), \qquad\text{for }0\leq p\leq r-1; \eqnum[eq:frame-spectral-sequence]
$$
see \cite[\S6.1.15]{Weibel94} and compare \cite[Lemmas~2.3--2.4]{NS90}. All filtrations in a fixed total degree are finite.

The row $t=0$ in~\eqref{eq:frame-spectral-sequence} has one copy of $\Z$ in each column. The differential out of column $p$ is multiplication by $\sum_{i=0}^p(-1)^i$. Thus
$$
E^2_{p,0}=0\qquad\text{for }1\leq p\leq r-2. \eqnum[eq:frame-bottom-row]
$$
For positive even $p$ the outgoing differential is injective, and for odd $p$ the incoming differential from $p+1$ is the identity.

We induct on $n$. Assume $H_t(A^\times,\Z)=0$ for $0<t<n$; when $n=1$ this assumption is empty. Set $r=n+3$. The projection $\mathcal B_r\to B\Gamma_r$ then induces an isomorphism in degree $n$. Choose $m>n$ so that Lemma~\ref{lem:scalar-homology} applies in every positive degree at most $n$, and fix a unital copy of $F=\F_{2^m}$ in $A$. Put $C=F^\times$.

For $\lambda\in C$, define
$$
T_\lambda(v_1,\ldots,v_k)=(v_1\lambda,\ldots,v_k\lambda). \eqnum[eq:global-rescaling]
$$
Right multiplication by $\lambda$ need not be $A$-linear when $F$ is not central, but it preserves every frame: $v_i\lambda A=v_iA$, so the same complement may be used. The maps commute with deletion and satisfy $g(v_i\lambda)=(gv_i)\lambda$ for $g\in\Gamma_r$. Since $C$ is abelian, they define a $C$-action on the ordered semisimplicial set.

The identity on $E\Gamma_r$ together with~\eqref{eq:global-rescaling} defines an action on $\mathcal B_r$ covering the identity of $B\Gamma_r$. On the double complex, $1\otimes T_\lambda$ commutes with both differentials and preserves both filtrations. The Borel projection is therefore $C$-equivariant when the base has the trivial action. Since it induces an isomorphism on $H_n$, the $C$-action on $H_n(\mathcal B_r,\Z)$ is trivial.

To identify the induced action on~\eqref{eq:frame-spectral-sequence}, let $\sigma_k$ be the standard $k$-frame and put $t_{\lambda,k}=\operatorname{diag}(\lambda I_k,I_{r-k})$. Then $T_\lambda\sigma_k=t_{\lambda,k}\sigma_k$, so transport back to $\sigma_k$ by $t_{\lambda,k}^{-1}$ induces the stabilizer automorphism
$$
h\longmapsto t_{\lambda,k}^{-1}ht_{\lambda,k},\qquad\text{that is,}\qquad
 \begin{pmatrix}I_k&B\\0&H\end{pmatrix}
 \longmapsto\begin{pmatrix}I_k&\lambda^{-1}B\\0&H\end{pmatrix}.
$$
Equivalently, the orbit map $gJ\mapsto gt_{\lambda,k}J$ induces $x\mapsto xt_{\lambda,k}$ on $E\Gamma_r/J$, and $(xh)t_{\lambda,k}=(xt_{\lambda,k})(t_{\lambda,k}^{-1}ht_{\lambda,k})$ confirms the conjugation convention. Thus the action on the $E^1$-term is the inverse scalar action of Lemma~\ref{lem:stabilizer-scalars}.

The chosen deletion representatives commute with these stabilizer automorphisms only up to inner conjugation. If the rows of $B$ are $b_1,\ldots,b_k$, deleting row $i$ and placing that coordinate first among the new quotient coordinates defines
$$
f_i\begin{pmatrix}I_k&B\\0&H\end{pmatrix}
 =\begin{pmatrix}
 I_{k-1}&0&B_{\widehat i}\\
 0&1&b_i\\
 0&0&H
 \end{pmatrix}.
$$
The source action multiplies both $B_{\widehat i}$ and $b_i$ by $\lambda^{-1}$, whereas the target action multiplies only $B_{\widehat i}$. Their discrepancy is conjugation by $\operatorname{diag}(I_{k-1},\lambda^{-1},I_q)$ in the target stabilizer, where $q=r-k$. Inner automorphisms induce the identity on homology, and the global action on the double complex ensures compatibility with every page of the spectral sequence.

For $1\leq p<n$, put $t=n-p$ and $q=r-p-1$. Then $0<t<n$ and $q\geq3$. Since $\Gamma_q\cong A^\times$, it follows from the induction hypothesis that $H_t(\Gamma_q,\Z)=0$. By Lemma~\ref{lem:stabilizer-scalars}, with the inverse scalar action, $H_t(J_{p+1,q}(A),\Z)=Q_t$ is $2$-primary and has zero $C$-coinvariants. Each $E^\infty_{p,n-p}$ is a $C$-stable subquotient of $E^1_{p,n-p}$, so exactness of coinvariants on $2$-primary modules implies
$$
\bigl(E^\infty_{p,n-p}\bigr)_C=0\qquad\text{for }1\leq p<n. \eqnum[eq:lower-filtration-coinvariants]
$$
When $n=1$, there are no pieces in this range.

Since $r=n+3$, \eqref{eq:frame-bottom-row} implies $E^\infty_{n,0}=0$. For $p=0$, the stabilizer map and the Borel projection form
$$
H_n(J_{1,r-1}(A),\Z)\longrightarrow H_n(\mathcal B_r,\Z)\xrightarrow{\cong}H_n(\Gamma_r,\Z).
$$
The composite is induced by the stabilizer inclusion and is zero by hypothesis~(1) and Lemma~\ref{lem:stabilizer-scalars}, since $r-1=n+2$. The first map has image $F_0H_n(\mathcal B_r,\Z)$; hence this filtration subgroup is zero.

All graded pieces of the finite filtration of $H_n(\mathcal B_r,\Z)$ now have zero $C$-coinvariants. Applying the right-exact coinvariant functor successively to the short exact sequences of this filtration, we obtain $H_n(\mathcal B_r,\Z)_C=0$. Since the $C$-action on $H_n(\mathcal B_r,\Z)$ is trivial, $H_n(\mathcal B_r,\Z)=0$. The Borel comparison and the isomorphism $\Gamma_r\cong A^\times$ then imply $H_n(A^\times,\Z)=0$, completing the induction.
\end{proof}

\begin{theorem}\label{thm:acyclicity}
The unit group of $R=L_{\F_2}(1,2)$ is integrally acyclic. More generally, $H_n(\GL_r(R),\Z)=0$ for every $n>0$ and $r\geq1$.
\end{theorem}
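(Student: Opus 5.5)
The plan is to apply Theorem~\ref{thm:acyclicity-criterion} with $A=R$ and then transport the conclusion to every rank. We first check that $R$ meets the standing assumptions of Section~\ref{sec:acyclicity-criterion}. It is a nonzero unital $\F_2$-algebra, as shown by the word representation in Section~\ref{sec:leaf-coordinates}. Moreover $R^2\cong R$ as right modules, by Lemma~\ref{lem:leaf-coordinates} applied to the leaf set $(e,f)$: the row $L=(e,f)$ and the column $L^*$ are mutually inverse coordinate maps.

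Next we verify the two numbered hypotheses. Hypothesis~(1) asks that, for each $n>0$, the standard inclusion $\GL_{n+2}(R)\to\GL_{n+3}(R)$ induce zero on $H_n$. This is the case $r=n+2$ of Proposition~\ref{prop:zero-padding}, which gives vanishing in every positive degree and for every $r\geq1$. Hypothesis~(2) asks that $\widetilde H_i(X_r(R),\Z)=0$ for $0\leq i\leq r-3$ and $r\geq4$. This is Corollary~\ref{cor:frame-homology}, which even covers $r\geq3$. Theorem~\ref{thm:acyclicity-criterion} then gives $H_n(R^\times,\Z)=0$ for all $n>0$, which is the first assertion.

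For the general rank statement, fix $r\geq1$. By Lemma~\ref{lem:leaf-coordinates} there is a complete leaf set of size $r$, for instance the comb $L_r$. The map $Z\mapsto L_rZL_r^*$ is then a unital ring isomorphism $M_r(R)\cong R$. It therefore restricts to a group isomorphism $\GL_r(R)\cong R^\times$, and group isomorphisms induce isomorphisms on integral homology. Hence $H_n(\GL_r(R),\Z)=0$ for every $n>0$. For $r=1$ this is literally the unit group statement.

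No step here is a genuine obstacle, since the substantive work sits in Proposition~\ref{prop:zero-padding}, Corollary~\ref{cor:frame-homology} and Theorem~\ref{thm:acyclicity-criterion}. The only point needing care is matching indices and conventions. The criterion uses the inclusion $g\mapsto\operatorname{diag}(g,1)$ at rank $n+2$ and frame complexes for $r\geq4$, and both are exactly covered by the stated ranges of the earlier results. The simplices of $X_r(R)$ in Corollary~\ref{cor:frame-homology} are those of Definition~\ref{def:ordered-frames}, the same complex the criterion uses.
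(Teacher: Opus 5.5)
Your proposal is correct and is essentially the paper's own proof: you verify the hypotheses of Theorem~\ref{thm:acyclicity-criterion} using Lemma~\ref{lem:leaf-coordinates}, Proposition~\ref{prop:zero-padding} (with $r=n+2$) and Corollary~\ref{cor:frame-homology}, and then pass to every rank through the leaf-coordinate isomorphisms $\GL_r(R)\cong R^\times$. Your explicit checks that $R\neq0$ and $R^2\cong R$ only spell out standing assumptions that the paper's proof leaves implicit.
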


\begin{proof}
Lemma~\ref{lem:leaf-coordinates}, Proposition~\ref{prop:zero-padding}, and Corollary~\ref{cor:frame-homology} verify the hypotheses of Theorem~\ref{thm:acyclicity-criterion}. It follows that $H_n(R^\times,\Z)=0$ for all $n>0$. The leaf ring isomorphisms~\eqref{eq:leaf-ring-isomorphisms} identify every $\GL_r(R)$ with $R^\times$, so the same conclusion holds in every rank.
\end{proof}

\section{The Steinberg comparison}
\label{sec:steinberg}

The acyclicity of the unit group will be used to verify the following criterion for the Steinberg map.

For a nonzero unital ring $B$, write $t_{ij}(a)=I+aE_{ij}$ and let $E_m(B)$ be the subgroup of $\GL_m(B)$ generated by these matrices. For $m\geq3$, the ordinary Steinberg group $\St_m(B)$ has generators $X_{ij}(a)$, where $i\ne j$ and $a\in B$, and relations
$$
\begin{aligned}
\eqrownum &\quad X_{ij}(a)X_{ij}(b)=X_{ij}(a+b),\\
\eqrownum &\quad [X_{ij}(a),X_{kl}(b)]=1 \quad \text{for }i\ne l\text{ and }j\ne k,\\
\eqrownum &\quad [X_{ij}(a),X_{jk}(b)]=X_{ik}(ab) \quad \text{for distinct }i,j,k.
\end{aligned}
$$
Here $[x,y]=xyx^{-1}y^{-1}$. Let $\phi_m:\St_m(B)\to\GL_m(B)$ send $X_{ij}(a)$ to $t_{ij}(a)$, and put $N_m(B)=\ker\phi_m$. The standard stabilization homomorphism $j_m:\St_m(B)\to\St_{m+1}(B)$ is defined by $j_m(X_{ij}(a))=X_{ij}(a)$.

For a ring $B$ of characteristic two, let $X_n(B)$ be the ordered frame semisimplicial set of Definition~\ref{def:ordered-frames}.

\begin{theorem}[Steinberg comparison criterion]
\label{thm:raw-criterion}
Let $B$ be a nonzero unital ring of characteristic two, and fix $n\geq4$. Suppose that
\begin{enumerate}
\item $\GL_{n-1}(B)=E_{n-1}(B)$ and $\GL_{n-2}(B)=E_{n-2}(B)$;
\item $j_{n-1}(N_{n-1}(B))=1$ in $\St_n(B)$;
\item $|X_n(B)|$ is simply connected.
\end{enumerate}
Then $\phi_n:\St_n(B)\to\GL_n(B)$ is an isomorphism.
\end{theorem}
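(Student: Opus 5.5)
The plan is to apply Brown's presentation theorem \cite[Theorem~1 and \S3]{Brown84} to the action of $\GL_n(B)$ on the simply connected complex $|X_n(B)|$. We then lift the resulting presentation to $\St_n(B)$ and show that $\phi_n$ has a two-sided inverse.

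\textbf{Step 1: the action.} $\GL_n(B)$ acts without inversions on the ordered semisimplicial set $X_n(B)$, because frames are ordered. It is transitive on $p$-simplices for each $p$, by completing a frame to an invertible matrix. Choose the standard simplices $e_1$, $(e_1,e_2)$, $(e_1,e_2,e_3)$ as representatives. Their stabilizers are $J_{1,n-1}$, $J_{2,n-2}$, $J_{3,n-3}$.

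Brown's theorem, for a group acting on a simply connected complex with connected quotient, presents $\GL_n(B)$ as follows. The generators are the vertex stabilizer $J_{1,n-1}$ together with edge elements $g_\epsilon$. The relations are the edge identifications $g_\epsilon^{-1}ig_\epsilon=i'$ on the edge stabilizer, and the triangle relations coming from the $2$-simplices. Here the quotient is a simplex with one cell in each dimension. Pick $g_\epsilon$ to be a permutation-type elementary product sending $e_2\mapsto e_1$; in characteristic two one can take $t_{12}(1)t_{21}(1)t_{12}(1)$, which swaps $e_1,e_2$.

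\textbf{Step 2: lifting to $\St_n$.} First lift the vertex stabilizer. We have $J_{1,n-1}=U\rtimes\GL_{n-1}(B)$ with $U=B^{n-1}$ additive. Hypothesis (1) gives $\GL_{n-1}=E_{n-1}$, so every element of $\GL_{n-1}$ is a product of elementary matrices and can be lifted into $\St_{n-1}\subset\St_n$ via $j_{n-1}$. The lift is well defined because $j_{n-1}(N_{n-1})=1$ by hypothesis (2). This gives a homomorphism $\GL_{n-1}(B)\to\St_n(B)$, namely $\phi_{n-1}^{-1}$ followed by $j_{n-1}$, after reindexing coordinates $2,\dots,n$.

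The group $U$ lifts to the product of the $X_{1j}(b_j)$; these commute by the Steinberg relations. The conjugation action of $\GL_{n-1}$ on $U$ is respected in $\St_n$. It suffices to check this on generators $X_{kl}(c)$ with $k,l\ge2$, where it is a direct Steinberg relation computation. This yields a section $s:J_{1,n-1}\to\St_n$.

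Edge and triangle stabilizers are handled similarly using $\GL_{n-2}=E_{n-2}$. The lifts of $\GL_{n-2}$ obtained through different vertex stabilizers (for instance via coordinates $\{2,\dots,n\}$ versus conjugated by the swap) agree. The reason is that both factor through $\St_{n-2}$ embedded in $\St_n$, and conjugation by the lifted swap $\tilde g$ acts on $X_{ij}$ by permuting indices, again by Steinberg relations. Take $\tilde g=X_{12}(1)X_{21}(1)X_{12}(1)$.

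\textbf{Step 3: verifying Brown's relations in $\St_n$.} We check that the edge relations and triangle relations hold for the lifts. This produces a homomorphism $\psi:\GL_n(B)\to\St_n(B)$ with $\phi_n\psi=\id$. The image of $\psi$ contains every $X_{ij}(a)$: either it lies in $s(J_{1,n-1})$ directly, or it is a $\tilde g$-conjugate or a commutator of such elements. Hence $\psi$ is surjective, and therefore $\phi_n$ is an isomorphism.

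\textbf{Main obstacle.} I expect the main obstacle to be the triangle relations: the cocycle-type relation $g_{\epsilon_{01}}g_{\epsilon_{12}}=h\,g_{\epsilon_{02}}$ with $h$ in a vertex stabilizer, in Brown's formulation. One must confirm that it holds exactly in $\St_n$ and not merely up to $N_n$. My first attempt is to choose all edge elements as lifted permutation matrices of the form $X_{ij}(1)X_{ji}(1)X_{ij}(1)$, so that the triangle relation reduces to a braid-type identity among three transpositions. That identity can be verified in the unipotent-generated subgroup. Any remaining discrepancy lies in the stabilizer of the $2$-simplex, a subgroup of $J_{3,n-3}$, where hypothesis (2) and the $E_{n-2}$ lift force it to be trivial.
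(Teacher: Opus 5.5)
Your proposal is correct and follows essentially the same route as the paper: Brown's presentation for the transitive action on the simply connected ordered frame complex, the lift of $\GL_{n-1}(B)$ made well defined by hypotheses (1)--(2), the edge relation via a lift of $\GL_{n-2}(B)$ supported on coordinates $3,\ldots,n$ that commutes with $w_{12}=X_{12}(1)X_{21}(1)X_{12}(1)$, the triangle relation as the braid identity $w_{12}w_{23}w_{12}=w_{23}w_{12}w_{23}$, and surjectivity of the lifted homomorphism. Two points should be tightened: the word-independence on coordinates $3,\ldots,n$ comes from the lift of $\GL_{n-1}(B)$ rather than from a rank-$(n-2)$ Steinberg group, which would be rank two when $n=4$; and there is no triangle-stabilizer discrepancy to kill, because the vertex-stabilizer element $(23)$ in the face relation lifts to exactly $w_{23}$ by that same word-independence.
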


\begin{proof}
Put $G=\GL_n(B)$, $S=\St_n(B)$, and let $b_1,\ldots,b_n$ be the standard columns.

The action of $G$ on ordered $k$-frames is transitive, since a basis of a permitted complement completes the frame to an automorphism of $B^n$. In particular, there is one orbit in each of dimensions zero, one, and two. The vertex stabilizer $J$ and the stabilizer $K$ of the ordered edge are
$$
J=\left\{j(b,H)=\begin{pmatrix}1&b\\0&H\end{pmatrix}\right\}\qquad\text{and}\qquad
 K=\left\{k(a,b,H)=
 \begin{pmatrix}1&0&a\\0&1&b\\0&0&H\end{pmatrix}\right\}.
$$
In $J$, the row $b$ has length $n-1$ and $H\in\GL_{n-1}(B)$. In $K$, the rows have length $n-2$ and $H\in\GL_{n-2}(B)$. The upper rows in these full stabilizers are unrestricted. Let $\tau=(12)$ and $h=(23)$ be permutation matrices, so that $h\in J$, and put $\eta(k)=\tau k\tau^{-1}$. This automorphism of $K$ interchanges its two upper rows.

We apply Brown's presentation for a group acting on a simply connected complex \cite[Theorem~1 and \S3]{Brown84}. For the ordered frame complex, the Borel construction $Y=EG\times_G|X_n(B)|$ has fundamental group
$$
\Pi=\langle J,T\mid TkT^{-1}=\eta(k)\text{ for }k\in K,\quad ThT=hTh\rangle. \eqnum[eq:borel-presentation]
$$
This notation includes every multiplication relation in $J$. To see that the remaining relations are complete, consider the cellular Borel construction through total dimension two. The vertex orbit contributes $BJ$. In the cylinder over $BK$, the product of the zero-cell of $BK$ with the edge has total dimension one and supplies $T$; the products of the one-cells of $BK$ with the edge impose the conjugation relations between the endpoint inclusions. The zero-cell in the classifying space of the triangle stabilizer contributes one further two-cell. All other cells have total dimension at least three.

To determine the triangle relation, identify $\pi_1(Y)$ with the group of pairs $(g,[p])$, where $p$ is a path from $b_1$ to $gb_1$ taken up to homotopy relative to its endpoints, with multiplication
$$
(g,[p])(g',[p'])=(gg',[p*(gp')]).
$$
Represent $j\in J$ by its constant path and $T$ by $(\tau,E)$, where $E$ is the ordered edge $(b_1,b_2)$. For $k\in K$, both $Tk$ and $\eta(k)T$ have projection $\tau k$ and path $E$. The word $ThT$ has projection $(13)$ and path $b_1\to b_2\to b_3$, whereas $hTh$ has the same projection and path $b_1\to b_3$. The boundary of the ordered triangle $(b_1,b_2,b_3)$ therefore imposes the second relation in \eqref{eq:borel-presentation}.

The cells $(u,v)$ and $(v,u)$ are distinct; traversing one in reverse does not replace it by the other. Since $h^2=1$, the braid relation implies $(hT)h(hT)^{-1}=T$ and hence $T^2=1$.

The homotopy sequence of $|X_n(B)|\to Y\to BG$, together with the third hypothesis, shows that the projection $p:\Pi\to G$, defined by $p(j)=j$ and $p(T)=\tau$, is an isomorphism.

We now lift this presentation to $S$. Permuting indices is an automorphism of the Steinberg presentation, so the second hypothesis also applies to stabilization on coordinates $2,\ldots,n$. For $H\in\GL_{n-1}(B)=E_{n-1}(B)$, choose an elementary word representing $H$ and evaluate it on these coordinates. If two words represent the same $H$, their quotient is a word in $N_{n-1}(B)$; the second hypothesis makes its value in $S$ trivial. Hence the evaluation is independent of the chosen word, and concatenation defines a homomorphism
$$
\ell:\GL_{n-1}(B)\longrightarrow S
$$
covering $H\mapsto\operatorname{diag}(1,H)$.

For a row $b=(b_2,\ldots,b_n)$ set $x_1(b)=\prod_{j=2}^nX_{1j}(b_j)$, in increasing order. The factors commute. According to the Steinberg relations,
$$
\ell(H)x_1(b)\ell(H)^{-1}=x_1(bH^{-1}). \eqnum[eq:row-action]
$$
Indeed, for $H=t_{ij}(c)$ with $i,j\geq2$, conjugation changes only the $j$-th entry, replacing it by $b_j-b_i c$. This is right multiplication by $I-cE_{ij}$; the third Steinberg relation produces the new term, while all other factors commute. Elementary generation extends the identity to every $H$.

Define $\sigma(j(b,H))=\ell(H)x_1(b)$. By \eqref{eq:row-action},
$$
\sigma(j(b,H))\sigma(j(b',H')) =\ell(HH')x_1(bH'+b') =\sigma(j(bH'+b',HH')),
$$
which is precisely the matrix multiplication law in $J$. Thus $\sigma:J\to S$ is a homomorphism lifting the inclusion $J\to G$.

For $H\in\GL_{n-2}(B)$, set $\ell_0(H)=\ell(\operatorname{diag}(1,H))$. Since $\GL_{n-2}(B)=E_{n-2}(B)$, an elementary word representing $H$ uses only coordinates $3,\ldots,n$. Its evaluation is $\ell_0(H)$ by the definition of $\ell$, so this lift has support on those coordinates. For $n=4$, the word is evaluated through $\St_3(B)$ on coordinates $2,3,4$, without using a rank-two Steinberg group. For $k(a,b,H)\in K$, one consequently has
$$
\sigma(k(a,b,H))=\ell_0(H)x_2(b)x_1(a), \eqnum[eq:edge-section]
$$
where both row products range over columns $3,\ldots,n$.

Put $w_{ij}=X_{ij}(1)X_{ji}(1)X_{ij}(1)$. The Steinberg relations and characteristic two give $w_{ij}^2=1$. We next verify, inside the Steinberg presentation, that
$$
w_{12}X_{ij}(c)w_{12}^{-1} =X_{\tau(i),\tau(j)}(c). \eqnum[eq:weyl-action]
$$
Write $u=X_{12}(1)$ and $v=X_{21}(1)$, so that $w_{12}=uvu$. For $j\geq3$, the Steinberg relations imply
$$
\begin{aligned}
uX_{1j}(c)u^{-1}&=X_{1j}(c),\\
vX_{1j}(c)v^{-1}&=X_{2j}(c)X_{1j}(c),\\
u\bigl(X_{2j}(c)X_{1j}(c)\bigr)u^{-1}&=X_{2j}(c),
\end{aligned}
\qquad\text{and}\qquad
\begin{aligned}
uX_{j1}(c)u^{-1}&=X_{j2}(c)X_{j1}(c),\\
v\bigl(X_{j2}(c)X_{j1}(c)\bigr)v^{-1}&=X_{j2}(c),\\
uX_{j2}(c)u^{-1}&=X_{j2}(c).
\end{aligned}
$$
The cancellations in the last terms use characteristic two. Since $w_{12}^2=1$, the same formulas establish the identities with $1$ and $2$ interchanged. Generators $X_{ij}(c)$ with $i,j\notin\{1,2\}$ commute with both $u$ and $v$. For the opposite roots, choose $j\geq3$ and use
$$
X_{12}(c)=[X_{1j}(c),X_{j2}(1)].
$$
The preceding formulas send the right-hand side to $[X_{2j}(c),X_{j1}(1)]=X_{21}(c)$, and the reverse case follows again from $w_{12}^2=1$. Thus \eqref{eq:weyl-action} holds for every defining generator.

The element $w_{12}$ commutes with $\ell_0(H)$ and interchanges the two row products in \eqref{eq:edge-section}; those products commute with each other. Hence
$$
w_{12}\sigma(k)w_{12}^{-1}=\sigma(\eta(k))
\qquad\text{for every }k\in K.
$$
Moreover, $\sigma(h)=w_{23}$, since the chosen elementary word on the last coordinates represents $(23)$. Applying \eqref{eq:weyl-action} and its version with indices permuted to the defining words for $w_{23}$ and $w_{12}$, we obtain
$$
w_{12}w_{23}w_{12}=w_{13}=w_{23}w_{12}w_{23},
$$
so the triangle relation also lifts to $S$.

The presentation \eqref{eq:borel-presentation} now defines a homomorphism $\psi:\Pi\to S$ with $\psi|_J=\sigma$, $\psi(T)=w_{12}$, and $\phi_n\psi=p$. It remains to prove that $\psi$ is surjective. Its image contains $X_{ij}(c)$ for $i,j\geq2$ through $\ell$, and contains $X_{1j}(c)$ through the upper-row subgroup of $\sigma(J)$. By \eqref{eq:weyl-action}, the conjugates of $X_{i2}(c)$ for $i\geq3$ and of $X_{12}(c)$ by $w_{12}$ are $X_{i1}(c)$ and $X_{21}(c)$, respectively. Thus $\psi$ is surjective.

Since $p$ is an isomorphism, $s=\psi p^{-1}:G\to S$ is a surjective section of $\phi_n$. It follows that $s\phi_n=1_S$, so $s$ is the inverse of $\phi_n$.
\end{proof}

The next lemma controls the image of the kernel after one stabilization, using an additional index as in \cite[Chapter~III, proof of Theorem~5.2.1]{Weibel13}.

\begin{lemma}
\label{lem:padded-centrality}
For any unital ring $B$ and $m\geq3$, $j_m(N_m(B))$ is central in $\St_{m+1}(B)$.
\end{lemma}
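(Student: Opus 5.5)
We follow the classical argument that the kernel of $\St(B)\to\GL(B)$ is central, as in \cite[Chapter~III, proof of Theorem~5.2.1]{Weibel13}, and keep track of ranks by using the single extra index $m+1$. Let $x\in N_m(B)$ and put $y=j_m(x)\in\St_{m+1}(B)$. Since $\St_{m+1}(B)$ is generated by the $X_{ij}(a)$, it suffices to show that $y$ commutes with every generator. There are three cases: $i,j\leq m$; $i=m+1$; and $j=m+1$.

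First consider the column and row subgroups attached to the new index. Let $P\subset\St_{m+1}(B)$ be generated by the $X_{i,m+1}(a)$ with $i\leq m$. By the second Steinberg relation these generators commute with each other, so $a=(a_1,\ldots,a_m)^{\mathrm t}\mapsto\prod_iX_{i,m+1}(a_i)$ is a homomorphism from $B^m$ onto $P$. For $z\in\St_m(B)$, the relations imply
$$
j_m(z)\Bigl(\prod_iX_{i,m+1}(a_i)\Bigr)j_m(z)^{-1}=\prod_iX_{i,m+1}\bigl((\phi_m(z)a)_i\bigr).
$$
It is enough to check this on generators $z=X_{kl}(c)$ with $k,l\leq m$. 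The third relation gives $[X_{kl}(c),X_{l,m+1}(a_l)]=X_{k,m+1}(ca_l)$, and all other factors commute with $X_{kl}(c)$. The result is exactly the column $t_{kl}(c)a$. The formula then extends multiplicatively, because both sides are actions of $\St_m(B)$ and the left side depends on $z$ only through conjugation. In the same way, conjugation on the row subgroup generated by the $X_{m+1,j}(b_j)$ with $j\leq m$ acts by $b\mapsto b\,\phi_m(z)^{-1}$. If $\phi_m(x)=1$, both actions are trivial, so $y$ commutes with every $X_{i,m+1}(a)$ and every $X_{m+1,j}(b)$.

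Finally, for $i\neq j$ both at most $m$, write $X_{ij}(a)=[X_{i,m+1}(a),X_{m+1,j}(1)]$ using the third relation with the distinct indices $i,m+1,j$. Then $y$ commutes with both commutator factors, hence with $X_{ij}(a)$. So $y$ commutes with all generators and is central.

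The main obstacle is the multiplicative extension in the second paragraph. One must confirm that the relations give an honest action of $\St_m(B)$ on $P$ that factors through $\phi_m$. This holds because the defining relations are verified on generators, and conjugation by a product is the composite of the conjugations. The verification requires $m\geq2$ to have the index $m+1$ distinct from $i$ and $j$, and the hypothesis $m\geq3$ keeps $\St_m(B)$ within the paper's definition.
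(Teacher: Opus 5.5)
Your proposal is correct and follows the paper's proof essentially step for step. Both arguments use the commuting column and row subgroups attached to the extra index $m+1$, verify on generators that $j_m(z)$ acts on them through $a\mapsto\phi_m(z)a$ and $b\mapsto b\,\phi_m(z)^{-1}$ (so $j_m(N_m(B))$ centralizes both), and finish with $X_{ij}(a)=[X_{i,m+1}(a),X_{m+1,j}(1)]$; the paper's additional remark that $\phi_{m+1}$ is injective on these subgroups is not needed, as your argument shows.
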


\begin{proof}
For a column $a\in B^m$ and a row $b\in M_{1,m}(B)$, put
$$
u(a)=\prod_{i=1}^mX_{i,m+1}(a_i)
\qquad\text{and}\qquad
v(b)=\prod_{i=1}^mX_{m+1,i}(b_i).
$$
The factors in each product commute. The restriction of $\phi_{m+1}$ to either subgroup is injective, since its matrix image determines every coefficient. For $i,j\leq m$, the Steinberg relations imply
$$
X_{ij}(c)u(a)X_{ij}(c)^{-1}=u(t_{ij}(c)a)
\qquad\text{and}\qquad
X_{ij}(c)v(b)X_{ij}(c)^{-1}=v(bt_{ij}(c)^{-1}).
$$
Only $a_i$ changes in the first formula, by addition of $ca_j$; only $b_j$ changes in the second, by addition of $-b_i c$. Thus every word on the first $m$ indices acts on these two subgroups through its matrix image. An element of $j_m(N_m(B))$ has trivial matrix image, and hence centralizes both subgroups.

Every element of $j_m(N_m(B))$ also centralizes the generators supported on the first $m$ indices. Indeed, for distinct $i,j\leq m$,
$$
X_{ij}(c)=[X_{i,m+1}(c),X_{m+1,j}(1)].
$$
Both factors lie in the two subgroups already centralized. Therefore every element of $j_m(N_m(B))$ centralizes every defining generator of $\St_{m+1}(B)$, which proves the lemma.
\end{proof}

Over $R$, the $GE$ property reduces elementary generation to a calculation with commutators in $R^\times$.

\begin{lemma}
\label{lem:elementary-generation}
For $R=L_{\F_2}(1,2)$, one has $\GL_m(R)=E_m(R)$ for every $m\geq2$.
\end{lemma}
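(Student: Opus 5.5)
The plan is to reduce to a statement about $R^\times$ via the $GE$ structure and then use acyclicity. First, since $R\cong M_m(R)$ by Lemma~\ref{lem:leaf-coordinates}, every invertible matrix can be moved to a diagonal form: I would show that for $g\in\GL_m(R)$, $m\geq2$, elementary row and column operations reduce $g$ to $\operatorname{diag}(u,1,\ldots,1)$ with $u\in R^\times$. The natural route is to use the leaf-coordinate isomorphism $R^m\cong R$: a unimodular first column of $g$ can be transformed by elementary matrices into $b_1$. For this I would use the column $L_m^*$-type vectors: given the first column $v$, the row $L_m$ yields a unit-like element, and using Lemma~\ref{lem:word-multiplier}-style manipulations one finds $a_2,\dots,a_m$ with $v_1+\sum_{i\geq2} c_iv_i$ invertible. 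Once one entry is a unit, standard clearing gives the reduction to $\operatorname{diag}(u,I_{m-1})$ modulo $E_m(R)$.

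Second, $\operatorname{diag}(u,u^{-1})\in E_2(R)$ by the Whitehead lemma, so modulo $E_m(R)$ the class of $g$ depends only on the image of $u$ in the abelianization, and more precisely $\operatorname{diag}(uv,1)\equiv\operatorname{diag}(u,1)\operatorname{diag}(v,1)$ and commutators $\operatorname{diag}([u,v],1)$ lie in $E_2(R)$. Hence $\GL_m(R)/E_m(R)$ (after the reduction, which shows $\GL_m=E_m\cdot\operatorname{diag}(R^\times,1)$) is a quotient of $R^\times/[R^\times,R^\times]=H_1(R^\times,\Z)$. By Theorem~\ref{thm:acyclicity}, $H_1(R^\times,\Z)=0$, so $R^\times$ is perfect, every $u$ is a product of commutators, and $\operatorname{diag}(u,1)\in E_2(R)\subseteq E_m(R)$. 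This gives $\GL_m(R)=E_m(R)$.

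The main obstacle is the first step, specifically in rank two: making an entry of a unimodular column invertible by a single elementary operation. For $m\geq3$ one could alternatively use stable range considerations, but $R$ has no finite stable rank in the usual sense, so I would instead exploit $R^2\cong R$ concretely: write $g\in\GL_2(R)$ and multiply on the right by an elementary matrix so that the $(1,1)$ entry becomes $g_{11}+g_{12}c$; choosing $c$ via the left inverse row $(x_1,x_2)$ with $x_1g_{11}+x_2g_{12}=1$ and the Leavitt relations (e.g.\ embedding via $e,f$ so that $e^*(\cdot)$ and $f^*(\cdot)$ separate the pieces), I would aim for $c$ making $g_{11}+g_{12}c$ have a two-sided inverse. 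If this direct approach fails, I would fall back on the cited identification of Khanh--Thanh \cite[Theorem~4.3]{KT26}, which says the group generated by leaf-coordinate copies of $\GL(\F_2)$-type elementary matrices is all of $G$; since those generators are images of elementary matrices in $\GL_{2^s}(\F_2)\subseteq E_{2^s}(R)$, transported appropriately, this gives elementary generation and then the perfectness argument handles passage between ranks.
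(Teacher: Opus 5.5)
\textbf{There is a genuine gap in your first step.} Your second step is exactly the paper's. Perfectness of $R^\times$, from Theorem~\ref{thm:acyclicity}, together with $\operatorname{diag}(u,u^{-1})\in E_2(R)$, puts every invertible diagonal matrix in $E_2(R)\subseteq E_m(R)$.

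The gap is the first step, the $GE$ decomposition, which carries all the difficulty. The mechanism you propose for it fails.

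\emph{Counterexample.} Take the unit $w=ee\,e^*+ef(fe)^*+f(ff)^*$ from the proof of Proposition~\ref{prop:zero-padding}. In the coordinates $(e,f)$ it becomes $g=\left(\begin{smallmatrix}e&fe^*\\0&f^*\end{smallmatrix}\right)$, with inverse $\left(\begin{smallmatrix}e^*&0\\ef^*&f\end{smallmatrix}\right)$.
\begin{itemize}
\item Row version: for every $c$ one has $e^*(e+fe^*c)=1$. If $u=e+fe^*c$ were a unit, then $e^*=u^{-1}$, which is impossible since $e^*f=0$. So no $g_{11}+g_{12}c$ is ever a unit.
\item Column version: the first column of $g$, and of $\operatorname{diag}(g,I_{m-2})$, is $(e,0,\ldots,0)^{\mathrm t}$. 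Hence $v_1+\sum_{i\geq2}c_iv_i=e$ for all choices of the $c_i$.
\end{itemize}
This column can be cleared by other moves; for instance $t_{21}(e^*)$ first makes the $(2,1)$ entry equal to $1$. But finding such moves for every invertible matrix is precisely the content of a $GE$ theorem. As you note, $R$ has infinite stable rank, so there is no general one-step reduction principle. The isomorphism $M_m(R)\cong R$ does not help either, since leaf coordinates do not preserve elementary matrices.

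\emph{How the paper handles it.} The paper takes this step from Menal--Moncasi. $R$ is simple, and every nonzero $x$ admits $y,z$ with $yxz=1$; Lemma~\ref{lem:word-multiplier} gives exactly this with $y=\eta^*$. For such rings, every invertible matrix is a product of elementary and invertible diagonal matrices.

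\textbf{Your fallback can be made into a correct alternative, but ``transported appropriately'' hides its only nontrivial step.} Since $\GL_k(\F_2)=E_k(\F_2)$, the Khanh--Thanh theorem says $G$ is generated by the elements $1+\beta\gamma^*$, with $\beta,\gamma$ distinct leaves of a complete leaf set. These are elementary in their own leaf coordinates, not in the fixed comb coordinates $L_m=(\alpha_1,\ldots,\alpha_m)$. The missing comparison goes as follows.
\begin{itemize}
\item Write $\beta\gamma^*=\sum_{|\delta|=D}\beta\delta(\gamma\delta)^*$ with $D\geq m$.
\item Because $\gamma^*\beta=0$, all products of two summands vanish. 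Hence $1+\beta\gamma^*=\prod_\delta\bigl(1+\beta\delta(\gamma\delta)^*\bigr)$.
\item Write $\beta\delta=\alpha_i\beta'$ and $\gamma\delta=\alpha_j\gamma'$. If $i\neq j$, the factor becomes $t_{ij}(\beta'(\gamma')^*)$ in comb coordinates.
\item If $i=j$, the factor becomes the diagonal matrix $I_m+\beta'(\gamma')^*E_{ii}$. Its entry is a unit, since $(\beta'(\gamma')^*)^2=0$.
\end{itemize}
Only the diagonal factors need your perfectness argument. Written out, this route would replace the Menal--Moncasi citation by the Khanh--Thanh generation theorem plus a leaf refinement. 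As submitted, however, the proposal does not establish the $GE$ step.
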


\begin{proof}
The algebra $R$ is $V_{1,2}$ in \cite[\S4]{AGP02}: the universal row is $(e,f)$ and the inverse column is $(e^*,f^*)^{\mathrm t}$. Theorem~4.2 of that paper shows that $R$ is purely infinite simple. The proof of its Theorem~2.3 invokes Menal--Moncasi's $GE$ argument \cite[Theorem~2.2 and the remark after Corollary~2.3]{MM84}. The latter remark applies to a simple ring in which each nonzero $x$ admits $y,z$ with $yxz=1$, and requires no regularity assumption. Thus every invertible matrix over $R$ is a product of elementary matrices and invertible diagonal matrices.

By Theorem~\ref{thm:acyclicity}, the group $R^\times$ is perfect. It therefore remains to place every diagonal factor in $E_m(R)$. In a block on two coordinates, put $W(u)=t_{12}(u)t_{21}(-u^{-1})t_{12}(u)$. Then $W(u)W(-1)=\operatorname{diag}(u,u^{-1})$ by direct multiplication. For units $a,b$,
$$
\operatorname{diag}([a,b],1) =\operatorname{diag}(a,a^{-1}) \operatorname{diag}(b,b^{-1}) \operatorname{diag}(a^{-1}b^{-1},ba).
$$
Each factor on the right has the form $\operatorname{diag}(u,u^{-1})$ and hence belongs to $E_2(R)$. Since every unit is a finite product of commutators, $\operatorname{diag}(u,1)\in E_2(R)$ for every $u\in R^\times$. Embedding this $2\times2$ calculation in any chosen pair of coordinates shows that every invertible diagonal matrix lies in $E_m(R)$ for $m\geq2$. The $GE$ decomposition now proves the lemma.
\end{proof}

\begin{theorem}
\label{thm:steinberg-isomorphism}
For $R=L_{\F_2}(1,2)$ and every $r\geq3$, the canonical map $\phi_r:\St_r(R)\to\GL_r(R)$ is an isomorphism.
\end{theorem}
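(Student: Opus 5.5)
The plan is to apply Theorem~\ref{thm:raw-criterion} with $n=4$, $B=R$, and then transfer the rank-four isomorphism to every $r\geq3$ by Voronetsky's refinement theorem \cite[\S4, Proposition~1]{Voronetsky20}, as announced in the introduction.

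First we check the three hypotheses at $n=4$. Hypothesis~(1), namely $\GL_3(R)=E_3(R)$ and $\GL_2(R)=E_2(R)$, is Lemma~\ref{lem:elementary-generation}. Hypothesis~(3) is Corollary~\ref{cor:frame-simple-connectivity}.

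Hypothesis~(2) requires $j_3(N_3(R))=1$ in $\St_4(R)$. Since $\phi_3$ is surjective by Lemma~\ref{lem:elementary-generation}, there is a central extension-type situation:
$$
1\to j_3(N_3(R))\to j_3(\St_3(R))\to\ldots
$$
We make this precise as follows.
\begin{enumerate}
\item By Lemma~\ref{lem:padded-centrality}, $j_3(N_3(R))$ is central in $\St_4(R)$.
\item The group $\St_4(R)$ is perfect, since each $X_{ij}(a)$ is a commutator.
\item By Lemma~\ref{lem:elementary-generation}, $\phi_4$ is surjective.
\end{enumerate}
Consider the subgroup $P=\langle j_3(\St_3(R))\rangle\subseteq\St_4(R)$. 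The quotient map $P\to\GL_3(R)$ has central kernel containing $j_3(N_3(R))$. The plan is to use $H_2(\GL_3(R),\Z)=0$ from Theorem~\ref{thm:acyclicity}, together with perfectness of $\St_3(R)$, to show that the central extension $j_3(\St_3(R))\to\GL_3(R)$ with kernel $j_3(N_3(R))$ is a quotient of the universal central extension of the perfect group $\GL_3(R)$. That universal central extension is trivial because $H_2=0$, so the kernel vanishes.

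The delicate point is that $j_3(\St_3(R))$ is a central extension of $\GL_3(R)$ only because its kernel is central in the larger group, and hence a fortiori in the subgroup. Perfectness of $j_3(\St_3(R))$ follows from that of $\St_3(R)$. A perfect central extension of a group with $H_1=H_2=0$ is an isomorphism, by the five-term exact sequence $H_2(\GL_3)\to K/[K,P]\to H_1(P)\to H_1(\GL_3)$, where $K/[K,P]=K$ since $K$ is central. Hence $K=0$, which gives hypothesis~(2), and Theorem~\ref{thm:raw-criterion} yields $\phi_4$ an isomorphism.

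For other ranks, the main obstacle is the passage from rank four to all $r\geq3$. The leaf-coordinate isomorphisms $M_r(R)\cong M_4(R)$ are not compatible with elementary generators. I expect to invoke Voronetsky's refinement result in the following form: for a ring $A$ and $A\cong M_k(A')$-type refinements of coordinates, the Steinberg group $\St_r$ over a matrix ring agrees with $\St_{rk}$ over the base ring whenever $r\geq3$. Concretely, $M_r(R)\cong R$ and $M_4(R)\cong R$ give
$$
\St_r(R)\cong\St_r(M_4(R))\cong\St_{4r}(R) \quad\text{and}\quad \St_4(R)\cong\St_4(M_r(R))\cong\St_{4r}(R),
$$
compatibly with $\phi$ and with the matrix isomorphisms $\GL_r(R)\cong\GL_{4r}(R)\cong\GL_4(R)$. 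Then $\phi_r$ is identified with $\phi_4$ up to these isomorphisms, and is therefore bijective.

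The work in this step is checking that Voronetsky's hypotheses (rank at least $3$ on both sides) hold and that the diagram with $\phi$ commutes. For $r=3$ one also needs the base rank $3$, which the proposition allows.
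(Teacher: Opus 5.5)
Your proposal is correct and follows the paper's route. Hypothesis (2) of Theorem~\ref{thm:raw-criterion} at $n=4$ comes from $H_2(\GL_3(R),\Z)=0$, perfectness of $\St_3(R)$ and Lemma~\ref{lem:padded-centrality}, and the other ranks come from Voronetsky's refinement proposition. For hypothesis (2), you apply the five-term sequence to $j_3(\St_3(R))\to\GL_3(R)$, whose kernel is exactly the central subgroup $j_3(N_3(R))$; the paper instead gets $N_3=[\St_3(R),N_3]$ and then pushes forward by $j_3$. For the rank transfer, you compare $\phi_r$ with $\phi_4$ through the common refinement $\St_{4r}(R)$. That requires iterating the single-merge proposition through intermediate families of idempotents, all full in $M_{4r}(R)$ and of coarse rank at least $3$. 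The paper's chain of one-step isomorphisms $D_r$, built from $T_r,U_r$, does the same job a little more economically.
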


\begin{proof}
Write $S_r=\St_r(R)$, $G_r=\GL_r(R)$, and $N_r=\ker\phi_r$. By Lemma~\ref{lem:elementary-generation}, $\phi_3$ is surjective, and $S_3$ is perfect since each generator is a commutator using a third index. The five-term integral homology sequence for $1\to N_3\to S_3\to G_3\to1$ contains
$$
0=H_2(G_3;\Z)\longrightarrow N_3/[S_3,N_3] \longrightarrow H_1(S_3;\Z)=0,
$$
where $H_2(G_3;\Z)=0$ by Theorem~\ref{thm:acyclicity} and Lemma~\ref{lem:leaf-coordinates}. Hence $N_3=[S_3,N_3]$. By Lemma~\ref{lem:padded-centrality}, $j_3(N_3)$ is central in $S_4$, so
$$
j_3(N_3)=[j_3(S_3),j_3(N_3)]=1.
$$
Moreover, $G_2=E_2(R)$ and $G_3=E_3(R)$ by Lemma~\ref{lem:elementary-generation}, while $|X_4(R)|$ is simply connected by Corollary~\ref{cor:frame-simple-connectivity}. Theorem~\ref{thm:raw-criterion} therefore implies that $\phi_4$ is an isomorphism.

To pass from rank four to the other ranks, let $T_r:R^r\to R^{r+1}$ fix the first $r-1$ coordinates and send the last coordinate $x$ to $(e^*x,f^*x)^{\mathrm t}$. Let $U_r:R^{r+1}\to R^r$ fix the first $r-1$ coordinates and send the last pair $(y,z)$ to $ey+fz$. By the Leavitt identities, $U_rT_r=I_r$ and $T_rU_r=I_{r+1}$, so $\alpha_r(A)=T_rAU_r$ is a unital isomorphism of matrix rings.

For $r\geq3$, the corresponding Steinberg isomorphism $D_r:S_r\to S_{r+1}$ is given by
$$
\begin{aligned}
\eqrownum &\quad D_rX_{ij}(a)=X_{ij}(a) \quad \text{for }i,j<r,\\
\eqrownum &\quad D_rX_{ir}(a)=X_{ir}(ae)X_{i,r+1}(af) \quad \text{for }i<r,\\
\eqrownum[eq:steinberg-refinement] &\quad D_rX_{rj}(a)=X_{rj}(e^*a)X_{r+1,j}(f^*a) \quad \text{for }j<r.
\end{aligned}
$$
Apply \cite[\S4, Proposition~1]{Voronetsky20} to $M_{r+1}(R)$ with $S=\{1\}$, the specialization to ordinary groups explained immediately before Lemma~5. The standard matrix idempotents form a complete orthogonal family of full, Morita equivalent idempotents, since $E_{aa}=E_{ab}E_{bb}E_{ba}$. After merging the last two, $T_r,U_r$ identify the resulting presentation with $S_r$; the original family has $r+1\geq4$ members and defines $S_{r+1}$. The proposition therefore identifies these groups by the formulas in \eqref{eq:steinberg-refinement}. Thus $D_r$ is an isomorphism obtained by refinement, distinct from the standard stabilization $j_r$.

The refinement maps satisfy
$$
\phi_{r+1}D_r=\alpha_r\phi_r.
$$
Since both $D_r$ and $\alpha_r$ are isomorphisms, $D_r(N_r)=N_{r+1}$. Starting from $N_4=1$, iteration implies $N_r=1$ for $r>4$, and $D_3(N_3)=N_4$ implies $N_3=1$. Surjectivity of every $\phi_r$, $r\geq3$, follows from Lemma~\ref{lem:elementary-generation}.
\end{proof}

\section{Finite presentability}
\label{sec:finite-presentation}

Finite presentability now follows from Theorem~\ref{thm:steinberg-isomorphism} and the theorem of Krsti\'c and McCool. The injectivity of the comparison map is essential, since a quotient of a finitely presented group need not be finitely presented. The connection with unstable $K_2$ is discussed in \cite[\S8]{KT26}. We record a finite presentation of $R$ below; Appendix~\ref{app:explicit-presentation} specifies the resulting group presentation and the images of its generators in $G$.

\begin{theorem}\label{thm:finite-presentation}
The group $G=R^\times$ is finitely presented.
\end{theorem}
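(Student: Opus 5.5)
We combine Theorem~\ref{thm:steinberg-isomorphism} at rank five with the Krsti\'c--McCool theorem \cite[Theorem~3]{KM99}. That theorem states, roughly, that if a ring $B$ is finitely presented as a ring (here as an $\F_2$-algebra, hence as a ring, since $\F_2$ is the prime field), then $\St_m(B)$ is finitely presented for $m$ large enough; rank five is in the admissible range. The algebra $R$ is finitely presented by \eqref{eq:leavitt-algebra}: four generators $e,f,e^*,f^*$ and the five Leavitt relations, together with the characteristic relation $1+1=0$ when $R$ is viewed as a quotient of $\Z\langle e,f,e^*,f^*\rangle$. The first step is therefore to record this ring presentation and check that it meets the hypotheses of \cite[Theorem~3]{KM99} in the form stated there, namely unital and finitely presented as an associative ring.

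Next we apply \cite[Theorem~3]{KM99} to obtain a finite presentation of $\St_5(R)$. In this presentation the generators are $X_{ij}(a)$ with $a$ running over the generators $1,e,f,e^*,f^*$, and the relators are finitely many instances of the Steinberg relations together with relations encoding the ring relations. By Theorem~\ref{thm:steinberg-isomorphism}, $\phi_5:\St_5(R)\to\GL_5(R)$ is an isomorphism, so $\GL_5(R)$ is finitely presented. By Lemma~\ref{lem:leaf-coordinates} with the comb $L_5$, conjugation $Z\mapsto L_5ZL_5^*$ is a group isomorphism $\GL_5(R)\cong R^\times=G$. Finite presentability is invariant under isomorphism, so $G$ is finitely presented.

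The main obstacle is confirming the exact hypotheses and rank bound in Krsti\'c--McCool. Their theorem may require $m\geq 5$, or a slightly different finiteness notion, for example finite generation of $B$ as a ring together with finite presentation. If it requires a larger rank $m$, the argument is unchanged, because Theorem~\ref{thm:steinberg-isomorphism} holds for every $r\geq3$ and $\GL_r(R)\cong G$ for every $r$. The injectivity of $\phi_5$ is what transfers the finite presentation; surjectivity alone would yield only a quotient of a finitely presented group, which need not be finitely presented. We would state this explicitly when writing the proof.
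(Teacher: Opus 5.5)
Your proposal is correct and matches the paper's proof. The paper presents $R$ as $\Z\langle e,f,s,t\rangle$ modulo the six relators (including $2$) and applies \cite[Theorem~3]{KM99}, which needs only rank $n\geq4$, to conclude that $\St_5(R)$ is finitely presented; it then transfers this via the isomorphism $\phi_5$ of Theorem~\ref{thm:steinberg-isomorphism} and the leaf-coordinate identification $\GL_5(R)\cong G$ — exactly your steps. One minor aside: the Krsti\'c--McCool presentation, as used in the paper's appendix, has generators $x_{ij}(u)$ for all words $u$ of length up to $194$, not only single letters, but this does not affect your argument.
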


\begin{proof}
Let $F=\Z\langle e,f,s,t\rangle$ be the free associative unital ring on $e,f,s,t$, and let $J$ be the two-sided ideal generated by
$$
2,\quad se-1,\quad tf-1,\quad sf,\quad te,\quad es+ft-1. \eqnum[eq:ring-ideal-generators]
$$
There is a unital ring isomorphism $F/J\cong R$ sending $s$ to $e^*$ and $t$ to $f^*$. Indeed, the displayed relations hold in $R$, while $2=0$ makes $F/J$ an $\F_2$-algebra whose generators satisfy the Leavitt relations. The two maps supplied by these universal properties fix the four generators and are inverse. Thus $R$ is finitely presented as an associative unital ring in the sense used by Krsti\'c and McCool.

By \cite[Theorem~3]{KM99}, $\St_n(B)$ is finitely presented whenever $B$ is a finitely presented associative unital ring and $n\geq4$. Hence $\St_5(R)$ is finitely presented. Theorem~\ref{thm:steinberg-isomorphism} identifies this group with $\GL_5(R)$ through the canonical map $\phi_5$, and Lemma~\ref{lem:leaf-coordinates} identifies $\GL_5(R)$ with $G$. Therefore $G$ is finitely presented.
\end{proof}

\appendix
\section{An explicit finite presentation}
\label{app:explicit-presentation}

Retain $F=\Z\langle e,f,s,t\rangle$ and the two-sided ideal $J$ generated by the six elements in \eqref{eq:ring-ideal-generators}, so that $F/J\cong R$. We first identify the kernel of the natural map $\St_n(F)\to\St_n(F/J)$ to determine the group relations corresponding to these six ring relations. The following argument is used in \cite[pp.~178--179]{KM99}.

For $n\geq3$, let $K$ be the normal closure in $\St_n(F)$ of the elements $X_{ij}(y)$, where $i\ne j$ and $y$ ranges over the six generators of $J$ in \eqref{eq:ring-ideal-generators}. Put
$$
I=\{a\in F:X_{ij}(a)\in K\text{ for every }i\ne j\}.
$$
The additivity relation makes $I$ an additive subgroup. To see that it is a two-sided ideal, let $a\in I$, $b\in F$, and choose $k\notin\{i,j\}$. Then
$$
X_{ij}(ba)=[X_{ik}(b),X_{kj}(a)]\qquad\text{and}\qquad X_{ij}(ab)=[X_{ik}(a),X_{kj}(b)],
$$
so $ba,ab\in I$ by normality of $K$. Thus $J\subseteq I$.

Let $\pi:\St_n(F)\to\St_n(F/J)$ be the natural map. Every normal generator of $K$ lies in $\ker\pi$, so $\pi$ induces a homomorphism $\bar\pi:\St_n(F)/K\to\St_n(F/J)$. Conversely, since $J\subseteq I$, the class of $X_{ij}(a)$ in $\St_n(F)/K$ depends only on $a+J$: if $a-b\in J$, then
$$
X_{ij}(a)X_{ij}(b)^{-1}=X_{ij}(a-b)\in K.
$$
These classes satisfy the Steinberg relations over $F/J$ and therefore define a homomorphism inverse to $\bar\pi$. Consequently,
$$
\St_n(F)/K\cong\St_n(F/J). \eqnum[eq:steinberg-ring-quotient]
$$

We use the finite presentation constructed by Krsti\'c and McCool in \cite[\S3]{KM99}. Let $\mathcal T=\{e,f,s,t\}$ be an alphabet, let $\mathcal T^*$ be its free monoid, and denote the empty word by $\varepsilon$. Word multiplication is concatenation. Set $m=194$ and
$$
\mathcal W=\{u\in\mathcal T^*:1\leq |u|\leq m\}\qquad\text{and}\qquad \mathcal W^0=\mathcal W\cup\{\varepsilon\}.
$$
For $d=2,3,4$, let $\mathcal I_d$ be the set of ordered $d$-tuples of pairwise distinct elements of $\{1,2,3,4,5\}$. Define
$$
\begin{aligned}
\mathcal A&=\{(u,v)\in\mathcal W^2:|u|+|v|\leq m\},\\
 \mathcal B&=\{(u,v)\in\mathcal W^2:|u|+|v|\leq m+3\},\\
 \mathcal C&=\{(u,v)\in\mathcal W^2:|u|<m\}\qquad\text{and}\qquad\mathcal D=\mathcal C,\\
 \mathcal P&=\{(\varepsilon,\varepsilon)\}
       \cup\{(\varepsilon,a),(a,\varepsilon):a\in\mathcal T\}.
\end{aligned}
$$
Membership in $\mathcal W^2$ requires each word in $\mathcal B$ to have length at most $m$, even though their total length is allowed to be at most $m+3$. Likewise, the second word in $\mathcal C$ or $\mathcal D$ lies in $\mathcal W$, while the first satisfies the strict inequality $|u|<m$. The set $\mathcal P$ consists of nine ordered pairs.

Take the generators
$$
x_{ij}(u)\qquad\text{for }(i,j)\in\mathcal I_2\text{ and }u\in\mathcal W^0. \eqnum[eq:finite-generators]
$$
The empty word $\varepsilon$ evaluates to the coefficient $1$; the symbol $x_{ij}(\varepsilon)$ is a generator, not the group identity. Impose the following five families for every index tuple and coefficient pair in the indicated sets:
$$
\begin{aligned}
\eqrowtag{A}{eq:finite-A} &\quad [x_{ij}(u),x_{jk}(v)]=x_{ik}(uv)\\
&\quad \text{for }(i,j,k)\in\mathcal I_3\text{ and }(u,v)\in\mathcal A\cup\mathcal P;\\
\eqrowtag{B}{eq:finite-B} &\quad [x_{ij}(u),x_{kl}(v)]=1\\
&\quad \text{for }(i,j,k,l)\in\mathcal I_4\text{ and }(u,v)\in\mathcal B\cup\mathcal P;\\
\eqrowtag{$C_\ell$}{eq:finite-C-left} &\quad [x_{ik}(u),x_{jk}(v)]=1\\
&\quad \text{for }(i,j,k)\in\mathcal I_3\text{ and }(u,v)\in\mathcal C\cup\mathcal P;\\
\eqrowtag{$C_r$}{eq:finite-C-right} &\quad [x_{ki}(u),x_{kj}(v)]=1\\
&\quad \text{for }(i,j,k)\in\mathcal I_3\text{ and }(u,v)\in\mathcal C\cup\mathcal P;\\
\eqrowtag{D}{eq:finite-D} &\quad [x_{ij}(u),x_{ij}(v)]=1\\
&\quad \text{for }(i,j)\in\mathcal I_2\text{ and }(u,v)\in\mathcal D\cup\mathcal P.
\end{aligned}
$$
Thus every family includes $(\varepsilon,\varepsilon)$ and both $(\varepsilon,a)$ and $(a,\varepsilon)$ for each $a\in\mathcal T$. In \eqref{eq:finite-A}, the word $uv$ has length at most $m$, or at most one when $(u,v)\in\mathcal P$, so the generator on the right is defined.

For each $(i,j)\in\mathcal I_2$, impose also
$$
\begin{aligned}
\eqrowtag{$L_1$}{eq:finite-L1} &\quad x_{ij}(\varepsilon)^2=1,\\
\eqrowtag{$L_2$}{eq:finite-L2} &\quad x_{ij}(se)x_{ij}(\varepsilon)^{-1}=1,\\
\eqrowtag{$L_3$}{eq:finite-L3} &\quad x_{ij}(tf)x_{ij}(\varepsilon)^{-1}=1,\\
\eqrowtag{$L_4$}{eq:finite-L4} &\quad x_{ij}(sf)=1,\\
\eqrowtag{$L_5$}{eq:finite-L5} &\quad x_{ij}(te)=1,\\
\eqrowtag{$L_6$}{eq:finite-L6} &\quad x_{ij}(es)x_{ij}(ft)x_{ij}(\varepsilon)^{-1}=1.
\end{aligned}
$$
Since $|\mathcal I_2|=5\cdot4=20$, these six families contribute $6\cdot20=120$ labelled relations. The inverse signs are retained because the preceding presentation is over $\Z$; characteristic two is imposed only after \eqref{eq:finite-L1} is added.

\begin{proposition}\label{prop:explicit-presentation}
Let $\Pi$ be the group with generators \eqref{eq:finite-generators} and relations \eqref{eq:finite-A}--\eqref{eq:finite-D} and \eqref{eq:finite-L1}--\eqref{eq:finite-L6}, with exactly the finite index sets specified above. Put
$$
(\alpha_1,\alpha_2,\alpha_3,\alpha_4,\alpha_5) =(e,fe,f^2e,f^3e,f^4).
$$
For $u\in\mathcal T^*$, write $\bar u$ for its value in $R$ under $s\mapsto e^*$, $t\mapsto f^*$ and $\varepsilon\mapsto1$. Then
$$
x_{ij}(u)\longmapsto 1+\alpha_i\bar u\alpha_j^* \eqnum[eq:finite-evaluation]
$$
extends to an isomorphism $\Pi\cong G$.
\end{proposition}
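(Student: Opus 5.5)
The proof goes in three stages: identify $\Pi$ with $\St_5(R)$, then pass to $\GL_5(R)$ by Theorem~\ref{thm:steinberg-isomorphism}, and finally to $G$ by the leaf coordinates of Lemma~\ref{lem:leaf-coordinates} for the leaf set $L=(\alpha_1,\ldots,\alpha_5)$. This tuple is the comb $L_5$ with the letters $e,f$ interchanged, so it is a complete ordered leaf set. The map $Z\mapsto LZL^*$ is therefore a ring isomorphism $M_5(R)\to R$, and it sends $t_{ij}(a)=I+aE_{ij}$ to $1+\alpha_ia\alpha_j^*$. Hence \eqref{eq:finite-evaluation} is the composite
$$
\Pi\longrightarrow\St_5(R)\xrightarrow{\ \phi_5\ }\GL_5(R)\longrightarrow G,
$$
where the first map is $x_{ij}(u)\mapsto X_{ij}(\bar u)$. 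The last two maps are isomorphisms by Theorem~\ref{thm:steinberg-isomorphism} and Lemma~\ref{lem:leaf-coordinates}. It therefore suffices to show that the first map is well defined and bijective.

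For the first stage I would invoke \cite[\S3]{KM99}. For a free associative unital ring $\Z\langle\mathcal T\rangle$ on four letters and $n=5$, their finite presentation of $\St_n$ has generators $x_{ij}(u)$ for monomials $u$ of bounded length, and the families \eqref{eq:finite-A}--\eqref{eq:finite-D} with the stated length windows. I would check that the constant $m=194$ and the sets $\mathcal A,\mathcal B,\mathcal C,\mathcal D,\mathcal P$ meet their hypotheses: $m$ must be at least their explicit threshold for four generators, and the pairs with one letter must be included. The conclusion is that the group $\Pi_0$ with relations \eqref{eq:finite-A}--\eqref{eq:finite-D} maps isomorphically onto $\St_5(F)$ via $x_{ij}(u)\mapsto X_{ij}(u)$. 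In particular, every $X_{ij}(a)$ with $a\in F$ is a product of generators $x_{ij}(u)^{\pm1}$ by additivity, which accounts for the inverse signs.

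Next, \eqref{eq:steinberg-ring-quotient} identifies $\St_5(R)$ with $\St_5(F)/K$. Here $K$ is the normal closure of the $X_{ij}(y)$ for the six generators $y$ of $J$ in \eqref{eq:ring-ideal-generators}. Under $\Pi_0\cong\St_5(F)$, additivity in $\St_5(F)$ rewrites each $X_{ij}(y)$ as the left-hand side of the corresponding \eqref{eq:finite-L1}--\eqref{eq:finite-L6}. For example, $X_{ij}(es+ft-1)=X_{ij}(es)X_{ij}(ft)X_{ij}(1)^{-1}$ and $X_{ij}(2)=x_{ij}(\varepsilon)^2$. Adding these $120$ relations therefore presents $\St_5(F)/K\cong\St_5(R)$, and the induced generator map is $x_{ij}(u)\mapsto X_{ij}(\bar u)$ because $s\mapsto e^*$ and $t\mapsto f^*$.

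The main obstacle is the first stage. It requires matching the paper's index sets exactly with the statement of \cite[Theorem~3]{KM99} and its proof. Their relation families are stated for all monomials in a range depending on the number of ring generators and on $n$, and I expect the value $194$ and the asymmetric windows ($m+3$ in $\mathcal B$, strict $|u|<m$ in $\mathcal C$) to come from tracking their inductive generation of longer monomials via \eqref{eq:finite-A}. I would reconstruct that induction: define $x_{ij}(w)$ for longer words as commutators through a third index, and verify the Steinberg relations for all words from the finite families. Only then would I conclude that $\Pi_0\to\St_5(F)$ is injective.
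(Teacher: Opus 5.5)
Your proposal is correct and follows the paper's proof essentially step for step. You identify $\Pi_0$ with $\St_5(F)$ via Krsti\'c--McCool, the threshold being $194>3\cdot 4^3+1$. You then read $(L_1)$--$(L_6)$ as the normal generators of $K$, so that \eqref{eq:steinberg-ring-quotient} gives $\St_5(R)$. Finally you compose with $\phi_5$ and the leaf-coordinate isomorphism for $(\alpha_1,\ldots,\alpha_5)$, which is indeed a complete leaf set.

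One small correction: your aside that every $X_{ij}(a)$ with $a\in F$ is a product of generators $x_{ij}(u)^{\pm1}$ holds only when $a$ is supported on words of length at most $m$; longer words require commutators through a third index. This causes no harm, since you only use it for the six ring relators, which involve words of length at most two.
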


\begin{proof}
The construction in \cite[Section~3, pp.~179--182]{KM99} identifies the group defined by \eqref{eq:finite-A}--\eqref{eq:finite-D} with $\St_5(F)$, taking $x_{ij}(u)$ to $X_{ij}(u)$. Its sufficient bound is $m>3|\mathcal T|^3+1$, which holds because $194>193$. The relations indexed by nonempty words use $\mathcal A$, $\mathcal B$, $\mathcal C$, and $\mathcal D$. For the generators $x_{ij}(\varepsilon)$, the pairs in $\mathcal P$ impose all instances of these relation types having at least one coefficient word equal to $\varepsilon$ and total length at most one. The induction there establishes the remaining relations with at least one coefficient equal to $1$.

In $\St_5(F)$, additivity identifies \eqref{eq:finite-L1}--\eqref{eq:finite-L6} with $X_{ij}(y)=1$ for the six polynomials in \eqref{eq:ring-ideal-generators}. By \eqref{eq:steinberg-ring-quotient}, there is an isomorphism
$$
\kappa\colon\Pi\xrightarrow{\ \cong\ }\St_5(R),\qquad x_{ij}(u)\longmapsto X_{ij}(\bar u).
$$

Let $L=(\alpha_1,\ldots,\alpha_5)$ and let $L^*$ be the column of the words $\alpha_i^*$. Distinct words in the displayed tuple are prefix-incomparable, so $\alpha_i^*\alpha_j=\delta_{ij}$. The tuple is complete because repeated substitution of $ff^*=1-ee^*$ shows that
$$
\sum_{a=0}^3 f^aee^*(f^*)^a+f^4(f^*)^4=1.
$$
Thus $LL^*=1$ and $L^*L=I_5$, and the mutually inverse unital ring maps are
$$
\Theta(A)=LAL^*\qquad\text{and}\qquad \Theta^{-1}(a)=L^*aL=(\alpha_i^*a\alpha_j)_{i,j}. \eqnum[eq:five-leaf-coordinates]
$$
The composite $\Theta\phi_5\kappa$ sends each generator to \eqref{eq:finite-evaluation}. All three maps are isomorphisms, the middle one by Theorem~\ref{thm:steinberg-isomorphism}, and this proves the proposition. Moreover,
$$
(\alpha_i\bar u\alpha_j^*)^2=0\qquad\text{for }i\ne j,
$$
so the displayed generator image has two-sided inverse $1-\alpha_i\bar u\alpha_j^*$. In $R$, this inverse is the generator image itself.
\end{proof}

The inverse isomorphism can also be expressed in the given generators. Given $g\in G$, form $A=(\alpha_i^*g\alpha_j)_{i,j}$ and choose an elementary factorization
$$
A=\prod_{\nu=1}^q(I_5+a_\nu E_{i_\nu j_\nu}).
$$
Such a factorization exists because $\GL_5(R)=E_5(R)$. Lift each $a_\nu$ to a finite integer linear combination of words in $F$. Represent a coefficient sum by a product in the corresponding root subgroup, and an integer multiple by a power. For a word $u$ of length at most $m$, use $x_{ij}(u)$; the empty word uses $x_{ij}(\varepsilon)$.

For a longer word $u=va$, with $a$ its last letter, use the expression $[x_{ik}(v),x_{kj}(a)]$, where $k$ is the least index outside $\{i,j\}$ and the first entry is constructed recursively if $|v|>m$. The recursion terminates because the first word becomes shorter at each step. The resulting product represents the inverse image of $g$ in $\Pi$. Its independence of the polynomial lifts and the elementary factorization follows from the isomorphisms proved above.

The presentation has
$$
20(1+4+\cdots+4^{194})=\frac{20(4^{195}-1)}3
$$
generators. Every generator and relator is indexed by one of the finite sets specified above; no claim of minimality is made.

\section*{Funding}

Huynh Viet Khanh is supported by the Vietnam National Foundation for Science and Technology Development (NAFOSTED) under Grant No.~101.04-2025.41.

\end{document}